\title{Bubbling phenomenon for Hermitian Yang-Mills connections}
\author{Yang Li} 	
\def\@maketitle{%
	\newpage
	\null
	\vskip 2em%
	\begin{center}%
		\let \footnote \thanks
		{\Large\bfseries \@title \par}%
		\vskip 1.5em%
		{\normalsize
			\lineskip .5em%
			\begin{tabular}[t]{c}%
				\@author
			\end{tabular}\par}%
		\vskip 1em%
		{\normalsize \@date}%
	\end{center}%
	\par
	\vskip 1.5em}
\newtheorem{thm}{Theorem}[section]
\newtheorem{lem}[thm]{Lemma}
\theoremstyle{definition}
\newtheorem{eg}[thm]{Example}
\newtheorem{cor}[thm]{Corollary}
\newtheorem*{rmk}{Remark}
\newtheorem*{Acknowledgement}{Acknowledgement}
\newcommand{\cf}{\emph{cf.} }
\newcommand{\R}{\mathbb{R}}
\newcommand{\C}{\mathbb{C}}
\newcommand{\Z}{\mathbb{Z}}
\newcommand{\Lap}{\Delta}
\newcommand{\norm}[1]{\left\lVert #1 \right\rVert}
\begin{document}
\maketitle

\begin{abstract}
We construct local examples of singular Hermitian Yang-Mills connections over $B_1\subset \C^3$ with uniformly bounded $L^2$-energy, but the number of essential singular points can be arbitrarily large.
\end{abstract}

A central goal in the analytic study of higher dimensional gauge theory is to find a good compactification of the moduli spaces of instantons, or more generally Yang-Mills (YM) connections. The best known general analytic convergence theory \cite{Tian}\cite{Tao}\cite{Naber} in the local setting can be summarized as follows. Given a sequence of YM connections $A_i$ on $B_2\subset \R^n$ with $n\geq 4$, assuming the uniform bound on $L^2$ energy $\int |F_{A_i}|^2 dvol\leq \Lambda$, then after passing to subsequences, the followings hold true on an interior ball $B_1$:

\begin{itemize}
\item (Codimension 4 convergence \cite{Tian}) Away from a possibly empty $(n-4)$-rectifiable closed subset $\Sigma$ with $\mathcal{H}^{n-4}(\Sigma)\leq C(n,\Lambda)$, modulo gauge transforms $A_i$ converge in $C^\infty_{loc}$ topology to a limiting smooth YM connection $A_\infty$ on $B_1\setminus \Sigma$. If $A_i$ are higher dimensional instantons (eg. Hermitian-Yang-Mills connections, $G_2$-instantons,  or $Spin(7)$-instantons), then so is $A_\infty$, and $\Sigma$ is a calibrated current (resp. complex codimension 2 subvarieties, associative currents, Cayley currents).

\item  (Removable singularity \cite{Tao}\cite{Uhlenbeck2}) Assume further that $A_\infty$ is stationary on $B_1$. There is a possibly empty subset $Sing\subset \Sigma$ with $\mathcal{H}^{n-4}(Sing)=0$, such that after gauge transform, $A_\infty$ extends to a smooth connection on $B_1\setminus Sing$. The smallest such $Sing$ is called the \emph{essential singular set} of $A_\infty$.

\item
(Transverse bubbles \cite{Tian}\cite{Naber})  A transverse bubble $A_x$ at $x\in \Sigma$ is a smooth Yang-Mills connection over $\R^n$ isomorphic to the pullback of a YM connection on $\R^4\simeq \R^n/T_x\Sigma$, such that for some sequence of centre points $x_i\to x$ and scale parameters $\epsilon_i$ depending on $x_i$, the blow up sequence $\lambda_{ x_i, \epsilon_i }^* A_i$ subsequentially converge to $A_x$ up to gauge transform, where $\lambda_{ x_i, \epsilon_i }$ refers to the scaling diffeomorphism $y\mapsto  x_i+\epsilon y$. Nontrivial transverse bubbles exist at $\mathcal{H}^{n-4}$-a.e $x\in \Sigma$. If $A_i$ are higher dimensional instantons, then $A_x$ are isomorphic to the pullback of ASD connections on $\R^4$.

\item  (Energy identity \cite{Tian}\cite{Naber}) The curvature density converges weakly as measures:
\[
|F_{A_i}|^2 dvol\to |F_{A_\infty}|^2 dvol +\nu, \quad \nu= \Theta \mathcal{H}^{n-4}\lfloor _\Sigma ,
\]
where $\nu$ is a measure supported on $\Sigma$ (called the defect measure), and the density $\Theta: \Sigma \to \R_+$ is a bounded function. For $\mathcal{H}^{n-4}$-a.e. $x\in \Sigma$, the function $\Theta(x)$ can be computed as the sum of $L^2$-energies associated with the transverse bubble connections, where one needs to account for multiple bubbling phenomenon.

\item  (Curvature Hessian estimate \cite{Naber}) $\int_{B_1} |\nabla_{A_i}^2 F_{A_i}| \leq C(n, \Lambda)$.
\end{itemize}

The data $(A_\infty, \Sigma, \Theta)$ are referred to as \emph{ideal instantons}; including these into the instanton moduli space results in the 
\emph{Tian compactification}. This theory works on compact manifolds essentially verbatim. The fact that the $L^2$-energy is conformally invariant in dimension 4, is the ultimate reason why codimension 4 is critical in the above theory.

Conversely, Walpuski \cite{Walpuski} and others have  performed gluing constructions of higher dimensional instantons on manifolds with special holonomy, exhibiting the codimension 4 bubbling phenomenon. A sample gluing theorem requires the following data:
\begin{itemize}
\item  A \emph{smooth} instanton connection $A_\infty$ (eg. Hermitian Yang-Mills connection, $G_2$-instanton, $Spin(7)$-instanton) on a principal $G$-bundle $P_\infty$ over a (compact) special holonomy manifold $M$, satisfying some irreducibility and unobstructedness conditions.

\item  A \emph{smooth} calibrated  codimension 4 submanifold $\Sigma\subset M$, satisfying an unobstructedness condition.

\item  A principal $G$-bundle $P'$ over the normal bundle $\mathcal{N}\Sigma$ of $\Sigma$, with framing data so that over tubular neighbourhoods of $\Sigma$ one can identify $P_\infty$ with $P'$. For each normal plane $\mathcal{N}_x\Sigma\simeq \R^4$ one takes the  moduli space $\mathcal{M}_x$ of (framed) ASD instantons on $P'|_{\mathcal{N}_x\Sigma }$, and fits them together into a moduli bundle $\mathcal{M}\to \Sigma$ as $x\in \Sigma$ varies.

\item  A section $s$ of $\mathcal{M}\to \Sigma$ satisfying a first order Dirac type equation called the \emph{Fueter equation}. (For example, in the Hermitian Yang-Mills case this means $s$ is holomorphic). This is required to satisfy some non-degeneracy conditions.
\end{itemize}

The output is a 1-parameter family $\{A_\epsilon\}_{0<\epsilon\ll 1}$ of instantons over $M$ with uniformly bounded $L^2$-energy, which $C^\infty_{loc}$-converge to $A_\infty$ away from $\Sigma$ as $\epsilon\to 0$, and the $L^2$-curvatures bubble along $\Sigma$. Morever, the transverse bubbles are controlled by the Fueter map $s$, in the sense that for every $x\in \Sigma$, the scaled connections $\lambda_{x,\epsilon}^* A_\epsilon |_{ \mathcal{N}_x\Sigma}$ converge to a (framed) ASD connection on $\mathcal{N}_x\Sigma$ whose moduli paramter is $s|_x$. Intuitively, the Fueter map asserts large scale regularity for the variation of the transverse bubbles.

The fundamental problem in the compactification theory is that gluing methods require much stronger regularity assumptions of gluing data than would be waranted by the analytic convergence theory \emph{a priori}.  Standard open problems in this vein include the codimension conjecture (\cf \cite[Conjecture 2]{Tian}):
\begin{itemize}
\item Given any stationary Yang-Mills connection, show that the dimension of the essential singular set has Hausdorff dimension at most $n-5$. For nearly K\"ahler instantons, $G_2$-instantons and $Spin(7)$-instantons, improve this dimension bound to $n-6$.
\end{itemize}

This paper makes no positive progress on the compactification problem. Instead, we provide local examples illustrating \emph{why} it is difficult:

\begin{thm}\label{introthm}
Given every $N\in \mathbb{N}$ and $0<\epsilon< \epsilon_0(N)$, 
there exists a singular Hermitian Yang-Mills (HYM) metric  $h_{N,\epsilon}$ over $B_1\subset \C^3$, such that the $L^2$-energy is uniformly bounded independent of $\epsilon$ and $N$, but the number of essential singularities in $B_{1/2}$ is proportional to $N$. 
\end{thm}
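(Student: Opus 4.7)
The construction is analytic, by gluing scaled copies of a fixed local singular HYM model. The guiding observation is that in real dimension $n=6$ the Yang--Mills $L^2$-energy carries conformal weight $n-4=2$, so a singular model concentrated at scale $\epsilon$ contributes only $O(\epsilon^2)$ to the global $L^2$-energy while retaining its essential singular structure; placing $N$ such singularities at pairwise distance $\gtrsim N^{-1/3}$ and scaling each to $\epsilon \lesssim N^{-1/2}$ keeps the total $L^2$-contribution uniformly bounded in $N$.

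First I would produce a local model: a singular HYM connection $(E_0, h_0)$ on $\mathbb{C}^3$ whose essential singular set is exactly $\{0\}$, with controlled asymptotic behaviour at infinity (ideally asymptotic to a flat reference connection with some power-rate decay). A natural candidate is the admissible Hermitian--Einstein metric of Bando--Siu on a stable rank-$2$ reflexive sheaf on $\mathbb{CP}^3$ whose non-locally-free locus is a single point $p_0$, restricted to an affine chart centred at $p_0$. For any $p \in B_{1/2}$ and $\epsilon > 0$, the pulled back metric $h_{p,\epsilon}(z) := h_0\bigl((z-p)/\epsilon\bigr)$ is again a singular HYM on $B_\epsilon(p)$ with essential singular point $p$, and the pointwise identity $|F_{h_{p,\epsilon}}|^2(z) = \epsilon^{-4}|F_{h_0}|^2\bigl((z-p)/\epsilon\bigr)$ combined with the change of variable $w = (z-p)/\epsilon$ yields
\[
\int_{B_\epsilon(p)} |F_{h_{p,\epsilon}}|^2 \, dvol \;=\; \epsilon^{\,2}\!\int_{B_1(0)} |F_{h_0}|^2 \, dvol \;=\; \epsilon^{\,2} E_0^{loc}.
\]

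Next I would fix $N$ points $\{p_1, \dots, p_N\} \subset B_{1/2}$ on a lattice of spacing $\asymp N^{-1/3}$ and set $\epsilon_0(N) := c\, N^{-1/2}$, so that for $\epsilon < \epsilon_0(N)$ the balls $B_{2\epsilon}(p_i)$ are pairwise disjoint and well inside $B_{3/4}$. I would build an approximate HYM $h^{app}_{N,\epsilon}$ on $B_1$ by gluing each scaled model $h_{p_i, \epsilon}$ on $B_\epsilon(p_i)$ to a flat reference connection on $B_1 \setminus \bigcup_i B_{2\epsilon}(p_i)$, with smooth cut-off interpolation on each annulus $B_{2\epsilon}(p_i) \setminus B_\epsilon(p_i)$; the asymptotic flatness of $h_0$ is used to keep the HYM error on each annulus small. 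Then I would perturb $h^{app}_{N,\epsilon}$ to an exact HYM $h_{N,\epsilon}$ by solving the HYM equation via an implicit function theorem in a weighted Sobolev framework adapted to the $N$ singular points. Since the essential singularity at each $p_i$ is a topological feature of the local sheaf structure --- preserved both under scaling and under small smooth perturbation --- the essential singular set of $h_{N,\epsilon}$ in $B_{1/2}$ would then equal $\{p_1, \dots, p_N\}$, while summing over the $N$ patches gives
\[
\int_{B_1} |F_{h_{N,\epsilon}}|^2 \, dvol \;\leq\; N \epsilon^{\,2} E_0^{loc} + o(1) \;\leq\; C
\]
uniformly in $N$ and $\epsilon < \epsilon_0(N)$.

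The hard part will be the uniform perturbation step: the linearised HYM operator at $h^{app}_{N,\epsilon}$ must have a right inverse whose norm does not degenerate as $N \to \infty$ or $\epsilon \to 0$. This requires choosing weighted spaces matched to the asymptotics of the local model at every $p_i$, verifying that the indicial roots of the singular model do not obstruct Fredholmness, and quantitatively controlling the cross-interactions between singularities (which should remain small thanks to the separation $N^{-1/3} \gg N^{-1/2} \geq \epsilon$). A secondary difficulty is producing in the first step a local model with sufficiently rapid asymptotic decay at infinity so that the gluing errors, when summed over the $N$ singularities, remain $o(1)$ rather than blowing up with $N$.
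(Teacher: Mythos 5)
Your proposal takes a genuinely different route from the paper's, but it has a serious gap at its core: the local model you need does not obviously exist, and in particular the paper's own local model fails to have the property you require.

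You assume a local singular HYM metric on $\C^3$ with an isolated essential singularity at the origin whose curvature decays to flat at infinity with a rate fast enough that $\int_{\C^3} |F_{h_0}|^2 < \infty$, so that rescaling to scale $\epsilon$ contributes only $O(\epsilon^2)$ energy and the gluing error on the outer annulus is small. The model $H$ actually constructed in Section~\ref{monadconstruction} emphatically does \emph{not} have this property: by (\ref{FHbound2}) the curvature $|F_H|$ in the region $\{|X|+|Y|\lesssim |Z|^{1/2}\}$ decays only like $|\vec{X}|^{-1}$, so $\int_{B_R}|F_H|^2$ grows like $R^2$ and $H$ has merely \emph{locally} finite $L^2$-curvature. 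The reason is structural: the transition from the tangent cone at the singular point to flat infinity necessarily passes through a codimension-four region where curvature concentrates at the transverse bubble scale, and that sheet extends to infinity. You would need to produce a different singular model with globally finite $L^2$-energy and polynomial decay, and this is not provided (nor does the paper suggest it exists). There is a further holomorphic obstruction to the "glue to a flat reference connection" step: a rank-two reflexive sheaf on $B_1$ with non-locally-free points $p_1,\dots,p_N$ restricts to a necessarily nontrivial holomorphic bundle on $B_1\setminus\{p_i\}$ (if it were trivial there, Hartogs extension would make the reflexive hull trivial and eliminate the singularities). So a flat, hence holomorphically trivial, reference connection cannot match the local models on the gluing annuli; the background must itself carry the nontrivial sheaf, and one must control its curvature.

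The paper sidesteps both difficulties by \emph{not} treating the singularities as independent. A single monad (\ref{mainansatzmonad}) with the oscillating Fueter datum $\sin z$ produces a reflexive sheaf whose singular points all lie on the $z$-axis, and the curvature concentrates along this common codimension-four locus, which shares its energy between the $N$ singularities. The energy bound then comes not from making each singularity energetically small, but from the scaling and periodicity: on the large domain $\mathcal{D}_N$ the energy is $O(N)$, rescaling by $N^{-1}$ drops it to $O(N^{-1})$, and taking $O(N)$ periodic translates yields $O(1)$ total energy while the number of singularities grows to $O(N)$. The existence of an exact HYM metric is obtained by solving a Dirichlet problem with the explicit ansatz $h_1$ as boundary data and using a barrier and Bando--Siu regularity, rather than by a weighted Fredholm/IFT argument at $N$ separated singularities. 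Your proposal, even if the missing local model and the sheaf-gluing could be supplied, would exhibit a qualitatively different degeneration (energy tending to zero and no defect measure as $\epsilon\to 0$), whereas the paper's examples retain a nontrivial defect measure along the $z$-axis, which is the phenomenon being illustrated.
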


Thus if we assume only a given bound on the $L^2$-energy of a stationary YM connection, then there is \emph{no a priori quantitative bound} on the $(n-6)$-Hausdorff measure of the  singular set, and in particular no bound on the topological complexity of the bundle. The rough idea is that for any fixed $N$ and $0<\epsilon < \epsilon_0(N)$, we can construct HYM connections on $B_1\subset \C^3$ starting from a Fueter map with very high oscillation:
\[
\{ |z|<1  \}\subset \C\to \C^2/\Z_2, \quad z\to (\sin (Nz)^{1/2}, 0), 
\]
such that the curvature is concentrated near $\Sigma=\{  x=y=0 \}$, and the key point is to control the $L^2$-energy uniformly in $N$ and $\epsilon$. The singular points correspond to the zeros of the Fueter maps, namely $\{ x=y=0, z\in \pi N^{-1} \Z  \}\cap B_1 \subset \C^3$. (Here $\C^2/\Z_2$ is the moduli space of framed one-instantons centred at the origin, see Example \ref{ADHMconstruction}.)

This high oscillation phenomenon also means that the Fueter maps satisfy no uniform estimate on the modulus of continuity.
The weak limit for the above sequence of Fueter maps is zero, which indicates a failure of compactness. Notably, if we take a subsequence of HYM metrics $\{ h_{N, \epsilon(N)} \}_{N=1}^\infty$ with $\epsilon(N)\to 0$ very rapidly as $N\to \infty$, then there is no way to extract a nontrivial limiting Fueter map as $N\to \infty$.

Since the $L^2$-energy is the only a priori controlled quantity which has found effective analytic use in general YM theory, these examples pose challenges to the compactification problem.

\begin{rmk}
On the positive side, it could be hoped that on a \emph{compact} manifold with special holonomy, the bad behaviours of our examples do not actually occur:
\begin{itemize}
	\item
It is known how to construct good compactifications of moduli spaces of HYM connections of a fixed Chern character over \emph{projective} manifolds, via the Hitchin-Kobayashi principle \cite{Ben}. This crucially uses the \emph{boundedness} of the corresponding family of stable  vector bundles, to prevent the wild behaviour of the holomorphic structure. The open challenge then is to understand this boundedness analytically.
\item
One may hope that such examples never arise as the limit of \emph{smooth} YM connections. This should be understood without recourse to complex geometry.
\item
Our examples have very high topological complexity. 
It is possible that such examples are ruled out by suitable \emph{genericity} hypotheses.
\end{itemize}
 
\end{rmk}

\begin{Acknowledgement}
The author is a postdoc at the IAS, funded by the Zurich Insurance Company Membership.
He thanks  Simon Donaldson, Thomas Walpuski and Aleksander Doan for  discussions.
\end{Acknowledgement}

\section{A model HYM metric}\label{monadconstruction}

We begin with a general curvature formula for the cohomology of a monad over a complex manifold.

\begin{lem}\label{curvatureformulamonadlemma}
Consider a monad   $E_0\xrightarrow{\alpha} E_1\xrightarrow{\beta} E_2$, namely a complex of Hermitian holomorphic vector bundles with $\alpha$ injective fibrewise and $\beta$ surjective fibrewise. Let $E=\ker \beta/\text{coker}( \alpha )$ be the cohomology bundle. Then the curvature $F_E$ of the natural induced connection on $E$ satisfies
\[
\langle F_E s, s'\rangle= 
\langle F_{E_1} s, s'\rangle
- \langle (\beta \beta^\dag)^{-1} (\nabla \beta) s, (\nabla \beta)s' \rangle
- \langle (\alpha^\dag \alpha)^{-1} (\nabla \alpha^\dag) s, (\nabla \alpha^\dag)s' \rangle,
\]
where $F_{E_1}$ is the Chern connection on $E_1$, and $s,s'$ are representing smooth sections of $E$ satisfying $\alpha^\dag s= \alpha^\dag s'= \beta s= \beta s'=0$, and $\nabla \alpha^\dagger$, $\nabla \beta$ are covariant derivatives computed on the Hom bundles.
\end{lem}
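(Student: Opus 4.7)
The plan is to realise the cohomology bundle $E$ as a smooth Hermitian subbundle of $E_1$ via orthogonal projection, and then apply the Gauss--Codazzi equation for a subbundle connection. The complex condition $\beta\alpha=0$ (implicit in ``complex of Hermitian holomorphic vector bundles'') ensures $\mathrm{im}\,\alpha \subset \ker\beta$, giving the orthogonal splitting $E_1 = E \oplus \mathrm{im}\,\alpha \oplus \mathrm{im}\,\beta^\dag$, where $E\cong \ker\beta \cap \ker\alpha^\dag$ and sections $s\in E$ are characterised pointwise by $\beta s = 0$, $\alpha^\dag s = 0$. Composing the subbundle connection on the holomorphic subbundle $\ker\beta \subset E_1$ with the quotient-bundle identification $\ker\beta / \mathrm{im}\,\alpha \cong \ker\beta \cap (\mathrm{im}\,\alpha)^\perp$ (for which the Chern connection of the quotient coincides with the subbundle connection on the orthogonal complement) shows that the natural Chern connection on $E$ is $\nabla_E = \pi_E \nabla_{E_1}|_E$, where $\pi_E$ is orthogonal projection.

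Next, I identify the two orthogonal components $A_\alpha$, $A_\beta$ of the second fundamental form $A \in \Omega^1(\mathrm{Hom}(E, E^\perp))$, valued in $\mathrm{im}\,\alpha$ and $\mathrm{im}\,\beta^\dag$ respectively. For $s \in E$, differentiating the defining constraints $\beta s = 0$ and $\alpha^\dag s = 0$ yields
\[
\beta\,\nabla_{E_1} s = -(\nabla\beta) s, \qquad \alpha^\dag\,\nabla_{E_1} s = -(\nabla\alpha^\dag) s,
\]
after which the explicit orthogonal projections $\pi_{\mathrm{im}\,\beta^\dag} = \beta^\dag(\beta\beta^\dag)^{-1}\beta$ and $\pi_{\mathrm{im}\,\alpha} = \alpha(\alpha^\dag\alpha)^{-1}\alpha^\dag$ extract
\[
A_\beta s = -\beta^\dag(\beta\beta^\dag)^{-1}(\nabla\beta) s, \qquad A_\alpha s = -\alpha(\alpha^\dag\alpha)^{-1}(\nabla\alpha^\dag) s.
\]

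Finally I substitute into the Gauss equation $\langle F_E s, s'\rangle = \langle F_{E_1} s, s'\rangle - \langle A s, A s'\rangle$, where the last pairing is the wedge-Hermitian pairing on $E^\perp$-valued $1$-forms. The cross terms vanish since $\mathrm{im}\,\alpha \perp \mathrm{im}\,\beta^\dag$, so $\langle As, As'\rangle = \langle A_\alpha s, A_\alpha s'\rangle + \langle A_\beta s, A_\beta s'\rangle$, and the adjoint identities $\langle \alpha u, \alpha v\rangle = \langle u, \alpha^\dag\alpha v\rangle$ and $\langle \beta^\dag u, \beta^\dag v\rangle = \langle u, \beta\beta^\dag v\rangle$ telescope each term into the form stated in the lemma. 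The one point requiring real care is the Gauss equation with the correct sign convention: this I would verify in a local unitary frame adapted to $E \oplus E^\perp$, in which skew-Hermiticity forces the $\nabla_{E_1}$ connection $1$-form into the block shape $\bigl(\begin{smallmatrix} \omega_E & -A^\dag \\ A & \omega_{E^\perp} \end{smallmatrix}\bigr)$; reading off the $(E,E)$-block of $d\omega + \omega\wedge\omega$ produces $F_E - A^\dag \wedge A$, and the identity $\langle A^\dag\wedge A\,s, s'\rangle = -\langle As, As'\rangle$ (as antisymmetric $2$-forms) then delivers the stated minus sign.
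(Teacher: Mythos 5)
The paper states Lemma~\ref{curvatureformulamonadlemma} without proof, treating it as a standard curvature formula for monad cohomology (in the spirit of the ADHM computations in Donaldson--Kronheimer), so there is no in-text proof to compare against. Your argument is correct and is the natural one: identifying $E$ with the smooth orthogonal complement $\ker\beta\cap\ker\alpha^\dag\subset E_1$, factoring the sub-quotient $\ker\beta/\mathrm{im}\,\alpha$ through the holomorphic subbundle $\ker\beta\subset E_1$ and the holomorphic quotient by $\mathrm{im}\,\alpha$ to see that $\nabla_E=\pi_E\nabla_{E_1}$, splitting the second fundamental form into its $\mathrm{im}\,\alpha$- and $\mathrm{im}\,\beta^\dag$-valued components $A_\alpha$, $A_\beta$ via the explicit projectors, observing that the cross terms vanish by the orthogonality $\mathrm{im}\,\alpha\perp\mathrm{im}\,\beta^\dag$, and finally absorbing the $(\alpha^\dag\alpha)^{-1}$ and $(\beta\beta^\dag)^{-1}$ factors via the adjoint identities. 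The one delicate point — the sign in the Gauss equation — is handled correctly via the skew-Hermitian block form of the connection matrix and the identity $\langle(A^\dag\wedge A)s,s'\rangle=-\langle As,As'\rangle$.
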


\begin{eg}\label{ADHMconstruction}
(ADHM construction of  one-instantons) Start from the monads over Euclidean $\C^2_{x,y}$ 
\[
\underline{\C} \xrightarrow{  (x, y, a_1, a_2)^t }\underline{\C^4} \xrightarrow{ (-y,x,b_1, b_2) }\underline{\C},
\]
where the underlines signify trivial vector bundles, and the parameters $a_1, a_2, b_1, b_2$ satisfy the ADHM equation
\[
a_1b_1+ a_2b_2=0, \quad |a_1|^2+ |a_2|^2= |b_1|^2+ |b_2|^2>0.
\]
The natural connections on cohomology bundles $E_{a,b}$ are ASD instantons on $\C^2$ with rank 2, charge 1 and curvature scale $\sim \sqrt{ |a_1|^2+|a_2|^2}$. The situation with $a_1=a_2=b_1=b_2=0$ is viewed as a degenerate case.
As Hermitian vector bundles $E_{a,b}$ are identified as $\ker \beta \cap \ker \alpha^\dagger\subset \underline{\C^4}=\underline{\C^2}\oplus \underline{\C^2} $, and the monad construction provides a natural projection map into the second $\underline{\C^2} $ factor, giving a trivialisation of $E_{a,b}$ near infinity known as \emph{framing data}. Notice the framed instantons are isomorphic under the $U(1)$-symmetry
\[
(a_1,a_2,b_1,b_2)\mapsto (a_1 e^{i\theta}, a_2 e^{i\theta}, b_1 e^{-i\theta},b_2 e^{-i\theta} ).
\]
The \emph{moduli space of framed instantons} centred at the origin is 
\[
\{ a_1b_1+a_2b_2=0, |a_1|^2+ |a_2|^2= |b_1|^2+ |b_2|^2     \}/U(1) \simeq \{ a_1b_1+a_2b_2=0 \}/\C^* \simeq \C^2_{u,v}/\Z_2.
\]
The last identification is explicitly 
\[
uv= a_1 b_1=- a_2b_2, \quad u^2= a_1b_2, \quad v^2=-a_2b_1.
\]
\end{eg}

In the rest of this section we describe a model HYM connection over $\C^3_{X,Y,Z}$ constructed by the author in \cite{Li}. The underlying reflexive sheaf $\mathcal{E}$ is the cohomology of the monad
\begin{equation}\label{modelmonad}
\underline{\C} \xrightarrow{ A=(X,Y,  1, 0)^t } \underline{\C^4}  \xrightarrow{ B= (-Y,X, 0,  Z)  }  \underline{\C},
\end{equation}
which is isomorphic to the homogeneous sheaf $\ker (\underline{\C^3}\xrightarrow{(X,Y,Z)} \underline{\C}  )
$, with a unique singular point at the origin. Using the Euler sequence over $\mathbb{P}^2$
\[
0\to \Omega_{\mathbb{P}^2 } \to \mathcal{O}(-1)^{\oplus 3} \to  \mathcal{O} \to 0,
\]
$\mathcal{E}$ is isomorphic as vector bundles over $\C^3\setminus \{0\}$ to the pullback of $\Omega_{\mathbb{P}^2} $ via $\C^3\setminus \{0\}\to\mathbb{P}^2 $.

An explicit background Hermitian metric $H_0$ on $\mathcal{E}$ is induced by
equipping  $\underline{\C^4}$ with
the \emph{nonstandard Hermitian structure}  given by the diagonal matrix 
\[
H_{\C^4}=\text{diag}( |\vec{X}|^{-1} , |\vec{X}|^{-1},   1,1 )
\]
where $|\vec{X}|^2=|X|^2+|Y|^2+|Z|^2$.

\begin{rmk}
The metric  $\text{diag}( (|\vec{X}|^2+1)^{-1/2} , (|\vec{X}|^2+1)^{-1/2},   1,1 )$ is used in \cite{Li}, but since we only need this in the region $\{ |\vec{X}|\gtrsim 1  \}$, these two choices are equivalent for the sake of estimates.
\end{rmk}

\begin{thm}\cite[Thm 2.2]{Li}
	There is a HYM connection $H$ on $\mathcal{E}$ over Euclidean $\C^3$ with locally finite $L^2$ curvature, which admits the decay estimates for $|\vec{X}|\gtrsim 1$:
	\begin{equation}\label{asymptoticestimate}
	|\nabla^k_{H_0}  \log ( H H_0^{-1}   ) |\lesssim_k ( |X|+|Y|+|Z|^{1/2}    )^{-k} |\vec{X}|^{-1} \max(1, \log \frac{|\vec{X}|}{|X|+|Y|+|Z|^{1/2}}  ), \quad k\geq 0.
	\end{equation}
The asymptotic estimate (\ref{asymptoticestimate}) and the HYM condition $\Lambda F_H=0$ determine $H$ uniquely.
\end{thm}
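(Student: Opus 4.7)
The plan is to make the ansatz $H = H_0 \exp(s)$ for a smooth self-adjoint section $s$ of $\End(\mathcal{E})$ on $\C^3 \setminus \{0\}$, which reduces the HYM equation $\Lambda F_H = 0$ to the semilinear elliptic PDE
\[
\Lambda \bar\partial \bigl( e^{-s}\, \partial_{H_0} e^{s} \bigr) + \Lambda F_{H_0} \;=\; 0.
\]
First I would compute the source term $\Lambda F_{H_0}$ explicitly via Lemma~\ref{curvatureformulamonadlemma}. With the nonstandard $H_{\C^4}$ one obtains $\alpha^\dagger\alpha = (|X|^2+|Y|^2)/|\vec X| + 1$ and $\beta\beta^\dagger = (|X|^2+|Y|^2)/|\vec X| + |Z|^2$, while the Chern curvature $F_{H_{\C^4}}$ involves $\partial\bar\partial \log|\vec X|$. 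After tracing with $\Lambda$ and bookkeeping the cancellations in Lemma~\ref{curvatureformulamonadlemma}, one should obtain the anisotropic bound $|\Lambda F_{H_0}| \lesssim |\vec X|^{-1}\,(|X|+|Y|+|Z|^{1/2})^{-2}$, which is the source decay compatible with (\ref{asymptoticestimate}).

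\textbf{Solving the equation.} The natural function-space framework reflects the \emph{anisotropic} scaling $(X,Y,Z)\mapsto (\lambda X, \lambda Y, \lambda^2 Z)$, which is the transverse one-instanton bubble scale along the $Z$-axis suggested by the ADHM description in Example~\ref{ADHMconstruction}. Setting $r(\vec X) = |X|+|Y|+|Z|^{1/2}$, I would work in weighted Hölder spaces with norms of the form
\[
\|u\|_{C^{k,\alpha}_\delta} = \sup \Bigl( |\vec X|^{\delta} \sum_{j\leq k} r^{j} |\nabla^j u| \Bigr) + \text{(weighted H\"older term)}.
\]
The key analytic input is that the linearised operator $u \mapsto \Lambda \bar\partial\, \partial_{H_0} u$ on self-adjoint endomorphisms is an isomorphism at a weight $\delta$ just below the resonant value $1$. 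I would prove this by a parametrix construction: on each anisotropic ball of scale $r$, freeze coefficients to reduce to an explicit model operator coming from the ADHM transverse geometry, invert there, and patch via a Whitney-type partition subordinate to $r$. A Banach fixed point / Newton iteration applied to the quasilinear equation, with right-hand side controlled by the source estimate above, then produces $s$ satisfying (\ref{asymptoticestimate}); the logarithmic factor appears naturally as the resonance correction at the critical weight $\delta = 1$.

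\textbf{Uniqueness and main obstacle.} Uniqueness under (\ref{asymptoticestimate}) follows by comparing two such HYM metrics $H_1, H_2$: the endomorphism $s = \log(H_2 H_1^{-1})$ is self-adjoint, decays to $0$ at infinity, and satisfies a linear elliptic equation with quadratic gradient nonlinearity. Tracing with $s$ and integrating by parts over $\C^3$ kills the boundary terms thanks to the decay of $\nabla s$, forcing $\partial s \equiv 0$ and hence $s = 0$; equivalently, this is the strict convexity of Donaldson's functional along geodesics of Hermitian metrics. The main obstacle is the weighted linear theory itself: $H_0$ is neither asymptotically conical in the usual sense nor of scattering type, and the anisotropic degeneration along the $Z$-axis puts this outside the reach of off-the-shelf $b$- or scattering calculus. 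One must handcraft the parametrix from ADHM transverse models and, crucially, verify that the patched inverse retains the sharp anisotropic weight—precisely the borderline behaviour that produces the $\log$ factor in (\ref{asymptoticestimate}).
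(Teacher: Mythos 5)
Your proposal takes a genuinely different route from the paper's (more precisely, from the proof in \cite{Li}, which this theorem cites). You set up a global perturbation theory: ansatz $H = H_0 e^s$, anisotropic weighted H\"older spaces adapted to $r = |X|+|Y|+|Z|^{1/2}$, a parametrix for the linearised operator built from ADHM transverse models, then a fixed-point argument. The paper's strategy avoids building a global Fredholm theory altogether: one solves the Dirichlet problem on larger and larger balls $B_R$ with boundary data $H_0$ via Donaldson's existence theorem \cite{Donaldsonboundary}, controls the solutions uniformly through the subharmonicity estimate $\Delta \log \mathrm{Tr}(h H_0^{-1}) \geq -C|\Lambda F_{H_0}|$ together with an explicit barrier given by the Newtonian potential of $|\Lambda F_{H_0}|$, and passes to a limit. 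The sharp anisotropic $C^0$ weight $|\vec X|^{-1}\max(1,\log(|\vec X|/\ell))$, including the logarithmic resonance, drops out of a direct potential-theory estimate on the source $|\Lambda F_{H_0}|\lesssim |\vec X|^{-1}\ell^{-2}$ (exactly as in Lemma~\ref{boundonU} of the present paper). Higher-order estimates then come from Bando--Siu interior regularity \cite{BandoSiu}\cite{JacobWalpuksi} applied on balls of scale $\ell$, which is purely local and automatically produces the extra factors $\ell^{-k}$. Uniqueness is likewise a comparison-principle argument with the same barrier, rather than the Donaldson-functional convexity you propose. The trade-off is clear: your approach is conceptually closer to the gluing literature and would yield a clean deformation theory for $H$, but, as you correctly flag, the anisotropic Fredholm package is the entire content of the argument and is nonstandard --- one would essentially have to re-derive from scratch the mapping properties of the patched parametrix at the critical weight, and that is not a deferrable technicality. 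The paper's barrier method buys exactly this: it never needs a global inverse of the linearisation, only a pointwise supersolution and local elliptic bootstrapping, which is why the sharp log-weighted estimate comes out with comparatively little machinery.
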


Morever, the tangent cone of $H$ at the origin agrees with the pullback of the Levi-Civita connection on $\Omega_{\mathbb{P}^2} $, up to a conformal change of the Hermitian structure, and $H$ is asymptotic to this tangent cone at a polynomial rate.
In particular,
\begin{equation}\label{FHbound1}
|\nabla^k_H  F_H| \lesssim_k |\vec{X}|^{-2-k}, \quad |\vec{X}|\lesssim 1, \quad k\geq 0.
\end{equation}

We now summarize the asymptotic behaviour of $H$ for $|\vec{X}|\gg 1$. The curvature satisfies a decay estimate
\begin{equation}\label{FHbound2}
|\nabla^k_H F_H| \lesssim_k \frac{ |\vec{X}|}{ (|X|+|Y|+|Z|^{1/2})^{4+k}  }, \quad |\vec{X}|\gtrsim 1, \quad k\geq 0.
\end{equation}
In particular, in the generic region where $|X|+|Y|$ is comparable to $|Z|$, the curvature has cubic decay rate as $|\vec{X}|\to \infty$, which entails that \emph{the tangent cone at infinity is flat}. The regularity scale around a point in $\{ |\vec{X}|\gtrsim 1  \}$ is at least $|X|+|Y|+|Z|^{1/2}$.

The curvature concentrates near $\{ |X|+|Y|\lesssim |Z|^{1/2} \}$. Take a ball of size $O(|\zeta|^{1/2})$ around a point $(0,0,\zeta)$ in this region with $|\zeta|\gg 1$. Observe that if the ambient Hermitian metric $H_{\C^4}$ is changed into 
\[
|\vec{X}| H_{\C^4}= \text{diag} (  1,1, |\vec{X}|, |\vec{X}|    ),
\]
then the induced connection on $\mathcal{E}$ is twisted by a $U(1)$ connection. We choose a square root $\zeta^{1/2}$. After rescaling the basis vectors on $\underline{\C^4}$, the monad (\ref{modelmonad}) can be written locally as
\[
\underline{\C} \xrightarrow{ (X,Y, \zeta^{1/2}, 0 )^t } \underline{\C^4} \xrightarrow{ (-Y, X, 0 ,\zeta^{1/2} )} \underline{\C},
\]
and the twisted Hermitian metric $|\vec{X}| H_{\C^4}$ is 
\[
\text{diag}( 1,1, \frac{ |\vec{X}|}{ |\zeta|},  \frac{ |\zeta| |\vec{X}|} { |Z|^2 }    ) \approx \text{diag}( 1,1, 1,  1    ).
\]
Comparing with Example \ref{ADHMconstruction}, we see that the twisted HYM metric on $\mathcal{E}$ locally approximately dimensionally reduces to a framed ADHM one-instanton, whose moduli parameter is identified as $(\zeta^{1/2},0)\in \C^2/\Z_2$.

\section{Main ansatz}

Our \emph{main ansatz} is conceptually a periodic version of the HYM connection described in section \ref{monadconstruction}. Let $N\in \mathbb{N}$, and $0<\epsilon \ll 1$ depending on $N$; all dependence on $\epsilon$ and $N$ will be explicitly tracked down.  For technical convenience, we will make the construction over a very large 
domain
\begin{equation}
\mathcal{D}_N=\{  |x|+|y|<N, \quad |\text{Im}(z)|<N   \}\subset \C^3/\Z.
\end{equation}
equivalently viewed as a domain in $\C^3$ periodic with respect to $z\mapsto z+2\pi$; later we will scale the solution down to the unit ball.  Consider the monad
\begin{equation}\label{mainansatzmonad}
\underline{\C} \xrightarrow{ \alpha=(x,y, \epsilon^2, 0)^t } \underline{\C^4}  \xrightarrow{ \beta= (-y,x, 0, \sin z)  }  \underline{\C},
\end{equation}
and take $E$ as the cohomology sheaf $\ker \beta/ \text{Im} \alpha$. We may assume $|\text{Re}(z)|\leq \pi$. Take a  function $r\geq 0$ on $\C^3/\Z$ with the properties:
\begin{itemize}
\item  In $\{ |\text{Im}(z)|>1  \}$, the function $r=|\sin z|$. The reader should keep in mind that $\sin z$ is a complex valued function, which is exponentially large for $|\text{Im}(z)|\gg 1$.
\item  In $\{ |\vec{x}|= \sqrt{|x|^2+|y|^2+|z|^2}  \gtrsim \frac{1}{3} ,   |\vec{x}-(0,0,\pi)|\gtrsim \frac{1}{3}, |\text{Im}(z)|\lesssim 1   \}$, then 
\[
|r^2-|\sin z|^2  |\leq C (|x|^2+|y|^2), \quad \frac{1}{10} \leq r\leq 10, \quad |\frac{\partial r}{\partial x}|+|\frac{\partial r}{\partial y}| \leq C(|x|+|y|),
\]
and $r$ has smooth bounds $\norm{r}_{C^k}\leq C(k)$. 
\item 
In $\{ |\vec{x}| \leq \frac{1}{3}    \}$, then $r=\sqrt{ |x|^2+|y|^2+|\sin z|^2}$. Similarly with the neighbourhood of the other singular point $(0,0,\pi)$.

\end{itemize}

 We provisionally equip $\underline{\C^4}$ with the nonstandard Hermitian structure given by the diagonal matrix
\[
h_{\C^4 }= \text{diag} ( \epsilon^2 r^{-1}, \epsilon^2 r^{-1}, 1,1       ), 
\]
which induces a background Hermitian  metric $h_0$ on $E$. The point is that $h_0$ has concentrated curvature near the $z$-axis, and is approximately HYM away from a very small neighbourhood of the origin.
A useful quantity to measure curvature concentration is
\begin{equation}
\ell= |x|+|y|+ \epsilon |\sin z|^{1/2}.
\end{equation}

\begin{lem}\label{meancurvaturebound}
The mean curvature $\Lambda F_{h_0}$ of the induced connection on $E$ admits the estimate on $\mathcal{D}_N$
\[
|\Lambda F_{h_0}| \leq  \begin{cases}	C \epsilon^2 r\ell^{-2} ,\quad  & r\gtrsim \frac{1}{3},
	\\
	C\epsilon^2 r^{-1}\ell^{-2}  , \quad &\epsilon^2\lesssim   r < \frac{1}{3}.
	\end{cases}
	\]
The curvature itself satisfies
\[
|F_{h_0} | \leq C \epsilon^2 r \max( \ell^{-4}, \ell^{-2}),\quad    r \gtrsim \epsilon^2. 
\]
\end{lem}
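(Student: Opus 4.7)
The plan is to apply the monad curvature formula of Lemma \ref{curvatureformulamonadlemma} to the monad \eqref{mainansatzmonad}, equipping $E_1=\underline{\C^4}$ with the twisted Hermitian metric $h_{\C^4} = \text{diag}(\epsilon^2 r^{-1},\epsilon^2 r^{-1},1,1)$, and to estimate each of the three resulting terms. Direct computation gives the scalar norms
\[
\alpha^\dagger\alpha = \frac{\epsilon^2(|x|^2+|y|^2)}{r}+\epsilon^4 \asymp \frac{\epsilon^2\ell^2}{r}, \qquad \beta\beta^\dagger = \frac{r(|x|^2+|y|^2)}{\epsilon^2}+|\sin z|^2 \asymp \frac{r\ell^2}{\epsilon^2},
\]
where the asymptotic equalities use $||\sin z|^2-r^2|\lesssim |x|^2+|y|^2$ from the defining properties of $r$. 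The Chern covariant derivatives on the twisted $\Hom$-bundles are computed from the diagonal connection form $\theta_i=-\partial\log r$ on the first two slots and $0$ on the last two; combined with the bounds $|\partial\log r|\lesssim 1$ in the outer region, $|\partial\log r|\lesssim 1/r$ in the inner region, and $|\cos z|\lesssim |\sin z|$ for $|\text{Im}(z)|\gg 1$, this yields $|\nabla\alpha^\dagger|^2\lesssim \epsilon^2/r$ and $|\nabla\beta|^2\lesssim r/\epsilon^2$. Finally, $F_{E_1}=-\partial\bar\partial\log h_{\C^4}$ is diagonal with $\partial\bar\partial\log r$ on the first two slots; this form vanishes identically in $\{|\text{Im}(z)|>1\}$ where $\log|\sin z|$ is pluriharmonic, is uniformly bounded in the transition region, and grows like $r^{-2}$ near the singular points.

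For the curvature bound $|F_{h_0}|$, the key observation is that, after rescaling $\underline{\C^4}$ so $h_{\C^4}$ becomes the identity, the monad \eqref{mainansatzmonad} is exactly the ADHM data $a=(\epsilon^2,0)$, $b=(0,\epsilon^2\sin z/r)$ of Example \ref{ADHMconstruction}. The commutator constraint $a_1b_1+a_2b_2=0$ is automatic and the balancing $|a|^2=|b|^2$ holds precisely when $r=|\sin z|$. Hence in the fiber $(x,y)$-direction the ansatz $h_0$ is approximately a framed ADHM one-instanton of scale $\delta\asymp \epsilon\sqrt{r}$ (up to the controlled defect $r^2-|\sin z|^2$), whose curvature has size $\asymp \delta^2/\tilde\ell^4$ in the rescaled fiber coordinates $\tilde x=x\epsilon/\sqrt{r}$, $\tilde y=y\epsilon/\sqrt{r}$, with $\tilde\ell = |\tilde x|+|\tilde y|+\delta$. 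Pulling back along this rescaling reproduces the $\epsilon^2 r/\ell^4$ decay rate; the $\ell^{-2}$ alternative in the $\max$ takes over at points where $\tilde\ell$ grows past unit scale, so that the concrete plug-and-chug into the monad formula produces the stated bound $|F_{h_0}|\lesssim \epsilon^2 r \max(\ell^{-4},\ell^{-2})$.

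The main obstacle is the mean-curvature bound, because a naive estimate of each of the three terms in Lemma \ref{curvatureformulamonadlemma} gives only $\asymp 1/\ell^2$, losing the crucial factor $\epsilon^2 r$. One must exhibit a cancellation stemming from the following fact: in the ADHM-balanced regime $r=|\sin z|$, the fiber Chern connection on $E$ is ASD in the $(x,y)$-plane by construction, so its fiber $\Lambda$-trace vanishes identically. What survives in $\Lambda F_{h_0}$ therefore comes only from the slow $z$-variation of the fiber instanton (the moduli parameter $\epsilon^2\sin z/r$) and from the defect $r^2-|\sin z|^2=O(\ell^2)$ in the outer region, producing the $\epsilon^2 r/\ell^2$ bound; in the inner region $r<1/3$, the distance-function character of $r$ contributes an extra $1/r$ through $|\partial\log r|$, yielding the sharper $\epsilon^2 r^{-1}/\ell^2$. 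The hardest step is to verify this cancellation rigorously, by expanding $(\nabla\alpha^\dagger)(s)$, $(\nabla\beta)(s)$, and $F_{E_1}s$ for unit-norm sections $s\in \ker\alpha^\dagger\cap\ker\beta$, recognizing that their leading $O(1/\ell^2)$ pieces assemble (via Lemma \ref{curvatureformulamonadlemma}) into the $\Lambda$-trace of the ADHM-ASD fiber connection — hence zero — and controlling the remainder by the defect with the expected $r$- and $\ell$-weights.
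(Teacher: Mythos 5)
Your proposal is essentially the paper's proof. The computation runs exactly as you set it out: plug the monad \eqref{mainansatzmonad} with the twisted metric $h_{\C^4}$ into Lemma~\ref{curvatureformulamonadlemma}, compute $\alpha^\dagger\alpha$, $\beta\beta^\dagger$, $\nabla\alpha^\dagger$, $\nabla\beta$, $F_{E_1}$, and bound $|s_1|+|s_2|$ for representatives $s\in\ker\beta\cap\ker\alpha^\dagger$; the $|F_{h_0}|$ bound then comes from direct estimation of each term using both bounds on $|s_1|+|s_2|$.

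Where you depart in presentation is the cancellation step for $\Lambda F_{h_0}$. The paper isolates the leading $(x,y)$-contraction of the $\alpha$- and $\beta$-terms into the factor $(|s_1|^2+|s_2|^2)\bigl[\epsilon^4 r^{-2}(\alpha^\dagger\alpha)^{-1}-(\beta\beta^\dagger)^{-1}\bigr]$ and estimates that scalar difference purely algebraically (it is proportional to $\epsilon^4(|\sin z|^2-r^2)$ over a product of denominators, giving exactly the three cases of \eqref{cancellationeffect}). You instead \emph{explain} the same cancellation geometrically: in the rescaled fiber coordinates the monad is precisely the ADHM data $a=(\epsilon^2,0)$, $b=(0,\epsilon^2\sin z/r)$, and the balancing $|a|^2=|b|^2$ holds iff $r=|\sin z|$, so the exact-ADHM (hence ASD) fiber instanton has vanishing $\Lambda$-trace in the $(x,y)$-directions, with the error controlled by the defect $r^2-|\sin z|^2=O((|x|+|y|)^2)\lesssim\ell^2$. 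These are the same cancellation: your balancing condition $|a|^2=|b|^2$ is exactly the algebraic identity $\epsilon^4 r^{-2}(\alpha^\dagger\alpha)^{-1}=(\beta\beta^\dagger)^{-1}$, and your defect bound reproduces \eqref{cancellationeffect}. Your gloss is a worthwhile complement — it explains \emph{why} the cancellation must happen and why the residual comes only from the $z$-variation of the moduli parameter and from the failure of $r$ to equal $|\sin z|$ — but it does not change the computation one has to do, which is the one you correctly anticipate in your final sentence: expand $(\nabla\alpha^\dagger)s$, $(\nabla\beta)s$, and $\langle F_{E_1}s,s\rangle$, extract the $(|s_1|^2+|s_2|^2)$ piece, and bound the difference of the two ADHM scalars.

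Two small remarks. First, the naive $1/\ell^2$ bound you flag as a worry for $|F_{h_0}|$ is in fact already fine in the region $\ell\lesssim 1$: choosing the second option in $|s_1|+|s_2|\lesssim\min(\epsilon^{-1}r^{1/2},r/(|x|+|y|))$ when $|x|+|y|\gtrsim\epsilon r^{1/2}$ gives $\epsilon^2 r\ell^{-4}$ directly with no cancellation, which is the same conclusion you reach by the rescaling picture. Second, in the inner region $\epsilon^2\lesssim r<\tfrac13$ the $r^{-2}$ loss relative to the outer case comes from the denominator $r^2\ell^4$ in the difference of inverses no longer being dominated by the $r\gtrsim\tfrac13$ lower bound, not (as your phrasing suggests) only from $|\partial\log r|$; the $|\partial\log r|$-type terms enter only the subleading $O(\epsilon^2 r^{-1}\ell^{-2})$ error in \eqref{curvatureansatz}. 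Neither affects the validity of your plan.
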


\begin{proof}
	We  shall compute the curvature $F_{h_0}$ using Lemma \ref{curvatureformulamonadlemma}. Taking into account the nonstandard Hermitian structure, the adjoint maps are given by
	\[
	\alpha^\dagger=( \epsilon^2\bar{x}r^{-1}, \epsilon^2\bar{y}r^{-1} , \epsilon^2, 0  ), \quad \beta^\dagger= (- \epsilon^{-2}\bar{y}r, \epsilon^{-2}\bar{x}r, 0,  \overline{ \sin z }  )^t,
	\]
	hence 
	\begin{equation}\label{betabetadagger}
	\alpha^\dag \alpha= \epsilon^2(|x|^2+|y|^2) r^{-1}+ \epsilon^4  , \quad \beta\beta^\dagger= \epsilon^{-2} (|x|^2+|y|^2)r    +  |\sin z|^2 .
	\end{equation}
For $r\gtrsim \epsilon^2$, then $\alpha^\dagger\alpha$ is comparable to $\epsilon^2 \ell^2 r^{-1}$, while $\beta\beta^\dagger$ is comparable to $\epsilon^{-2}\ell^2 r$.

Since $\alpha, \beta$ are holomorphic, $\alpha^\dagger, \beta^\dag $ are antiholomorphic, 
\begin{equation*}
	\begin{cases}
	\nabla \alpha^\dagger=\bar{\partial} \alpha^\dagger=   ( \epsilon^2 r^{-1} d\bar{x}-   \frac{ \epsilon^2 \bar{x} \bar{\partial}r }{ r^2  } , r^{-1} d\bar{y}- \frac{ \epsilon^2 \bar{y}\bar{\partial} r }{ r^2  }, 0, 0),
	\\
	\nabla \beta=  ( \bar{\partial} \beta^\dag  )^\dagger= ( -dy - y  r^{-1}\partial r
  , 
 dx+ x r^{-1}\partial r , 0, \cos z dz    ).
\end{cases}
\end{equation*}
Let $s=(s_1, s_2,s_3,s_4)^t$ be a smooth local section of $E$ represented as a section of $\underline{\C^4}$ with $\beta s= \alpha^\dag s=0$, so
\begin{equation}\label{nablaalphabeta}
\begin{cases}
(\nabla \beta)s= -s_1 dy+ s_2 dx-s_4\sin (z)  r^{-1}\partial r + s_4 \cos z dz,
\\
(\nabla \alpha^\dag) s= s_1 \epsilon^2 r^{-1} d\bar{x}+ s_2 \epsilon^2 r^{-1} d\bar{y} + s_3\epsilon^2 r^{-2} \bar{\partial} r .
\end{cases}
\end{equation}

By expressing $s_1, s_2$ in terms of $s_3, s_4$,
	\[
	|s_1|+|s_2| \leq \frac{ C r (|s_3|+|s_4|)  }{ |x|+|y|  } .
	\]
	Under the Hermitian structure
	\[
	|s|_{h_{\C^4}}^2= \epsilon^2(|s_1|^2+ |s_2|^2 )r^{-1} +  (|s_3|^2+|s_4|^2),
	\]
	we have
	\begin{equation}\label{s1+s2}
	|s_1|+|s_2|\leq C\min\{ \epsilon^{-1}r^{1/2}, \frac{  r }{ |x|+|y|  } \} |s|_{h_{\C^4}}.
	\end{equation}
	Now the Chern curvature on $\underline{\C^4}$ is given by $F_{E_1}= \bar{\partial} ( \partial h_{\C^4} h_{\C^4}^{-1} ),
	$
	so for $|\text{Im}(z)|> 1$,
	\[
	\langle \sqrt{-1} F_{E_1} s, s \rangle = 0,
	\]
and for $|\text{Im}(z)|\leq 1, r\gtrsim \epsilon^2$,
\[	
	\langle \sqrt{-1} F_{E_1} s, s \rangle	
	\leq C \epsilon^2 r^{-3}
(|s_1|^2+|s_2|^2  ) \leq C \epsilon^2  \frac{ |s|_{h_{\C^4}}^2}{ r\ell^2   } .
	\]	
Substituting (\ref{betabetadagger})(\ref{nablaalphabeta})(\ref{s1+s2}) into the curvature formula in Lemma \ref{curvatureformulamonadlemma}, 	
\begin{equation}\label{curvatureansatz}
\begin{split}
\langle \sqrt{-1}\Lambda F_{h_0} s,s\rangle =& -(\alpha^\dag \alpha)^{-1}  \epsilon^4 r^{-2} \sqrt{-1} \Lambda(s_1 d\bar{x}+s_2d\bar{y})\wedge (\bar{s}_1 d{x}+\bar{s}_2d{y})
\\
&- (\beta \beta^\dag)^{-1} \sqrt{-1} \Lambda(-s_1 dy+s_2dx)\wedge (-\bar{s}_1 d\bar{y}+\bar{s}_2d\bar{x})
\\
&+ \begin{cases}
O( \epsilon^2 r \ell^{-2} |s|_{h_{\C^4}}^2  ), \quad r\gtrsim \frac{1}{3},
\\
O(  \epsilon^2 \ell^{-2} r^{-1} |s|_{h_{\C^4}}^2  ), \quad \epsilon^2 \lesssim r\leq \frac{1}{3}.
\end{cases}
\end{split}
\end{equation}
Now notice the following cancellation effect: 
\begin{equation}\label{cancellationeffect}
|\epsilon^4r^{-2}(\alpha^\dag \alpha)^{-1}- (\beta \beta^\dag)^{-1}| \begin{cases}
=0  \quad |\text{Im}(z)|>1,
\\
\lesssim  \epsilon^4 \ell^{-2} , \quad  |\text{Im}(z)|\lesssim 1, r\gtrsim \frac{1}{3},
\\
\lesssim \epsilon^4 \ell^{-2}r^{-2}, \quad  \epsilon^2\lesssim  r < \frac{1}{3}.
\end{cases}
\end{equation}
hence 
\[
|\langle \sqrt{-1}\Lambda F_{h_0} s,s\rangle| \leq  \begin{cases}
C \epsilon^2 r\ell^{-2} |s|_{h_{\C^4}}^2,\quad  r\gtrsim \frac{1}{3},
\\
C\epsilon^2 r^{-1}\ell^{-2} |s|_{h_{\C^4}}^2 , \quad \epsilon^2\lesssim   r < \frac{1}{3},
\end{cases} 
\]
whence the required estimate on $|\Lambda F_{h_0}|$ follows. The estimate on $|F_{h_0}|$ proceeds the same way without using the cancellation effect.
\end{proof}

We now modify the Hermitian structure in a small neighbourhood of the origin, by gluing in the model HYM metric $H$ over $\C^3$ (\cf section \ref{monadconstruction}). 
Upon the variable substituion
\begin{equation}\label{gluingmap}
x= \epsilon^2 X, \quad  y= \epsilon^2 Y, \quad \sin z= \epsilon^2 Z,
\end{equation}
then $E$ pulls back to $\mathcal{E}$ (since modifying $\alpha, \beta$ by constants does not change the cohomology bundle), 
the Hermitian metric $h_{\C^4}$ is identified with $H_{\C^4}$, and $h_0$ with $H_0$. We can glue $h_0$ to $H$ on a neck region to obtain an approximate HYM metric $h_1$ on $E$:
\[
h_1= h_0+ \eta( H-h_0 ),
\]
where $\eta$ is a bump function supported on $\{r< 2\epsilon^{2/3} \}$, equal to one on $\{  r \leq \epsilon^{2/3}  \}$.

\begin{lem}\label{meancurvaturebound2}
For $r\lesssim 2\epsilon^{2/3}$, the mean curvature $\Lambda F_{h_1}$ satisfies the estimate
\begin{equation}
| \nabla^k_{h_1}  \Lambda F_{h_1} | \lesssim_k  \begin{cases}
r^{-k}, \quad   & r\lesssim  \epsilon^2,
\\
 r^2 \ell^{-2-k}     , \quad   & \epsilon^2< r\leq \epsilon^{2/3},
 \\
 \epsilon^2 \ell^{-2-k} r^{-1} \max(1, \log \frac{r}{\ell} )   , \quad   & \epsilon^{2/3}< r\leq 2\epsilon^{2/3}.
\end{cases}
\end{equation}
The curvature itself satisfies
\begin{equation}\label{curvatureHrescaled}
|\nabla^k_{h_1}  F_{h_1}| \lesssim_k \begin{cases}
\epsilon^2 r \ell^{-4-k}, \quad & \epsilon^2< r\leq 2\epsilon^{2/3}
\\
r^{-2-k}, \quad & r\lesssim \epsilon^2.
\end{cases}
\end{equation}
\end{lem}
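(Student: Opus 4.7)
The plan is to split the region $r\leq 2\epsilon^{2/3}$ into three zones determined by where the cutoff $\eta$ is $0$, $1$, or transitioning, and in each zone to either pull back the model estimates of Section \ref{monadconstruction} or to track the interpolation error directly. Record first the elementary scaling: under (\ref{gluingmap}), $r=\epsilon^2|\vec{X}|$, $\ell=\epsilon^2(|X|+|Y|+|Z|^{1/2})$, the Jacobian $dZ=\epsilon^{-2}\cos z\, dz$ satisfies $|\cos z-1|=O(|z|^2)=O(r^2)$ near the singular point, and a covariant tensor of valence $p$ has $x$-norm $\epsilon^{-2p}$ times its $X$-norm, up to factors $1+O(r^2)$.

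In the two zones $r\lesssim \epsilon^2$ and $\epsilon^2<r\leq \epsilon^{2/3}$ the cutoff $\eta\equiv 1$, so $h_1=H$, and the pointwise bounds (\ref{FHbound1}), (\ref{FHbound2}) on $\nabla^k_H F_H$ rescale immediately to the two cases of (\ref{curvatureHrescaled}). For the mean curvature, observe that $H$ is HYM only with respect to the pulled-back K\"ahler form $\tilde\omega = \tfrac{i}{2}\bigl(dx\wedge d\bar x + dy\wedge d\bar y + |\cos z|^2\, dz\wedge d\bar z\bigr)$, which differs from the Euclidean $\omega_x$ by an $O(|z|^2)=O(r^2)$ amount in the $dz$ direction. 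Hence $|\Lambda^{\omega_x} F_H|\lesssim r^2|F_H|_x$; multiplying by the bounds already established yields precisely the first two cases of the mean curvature estimate. Higher-$k$ versions follow by the product rule, with each extra derivative contributing at most a factor $\ell^{-1}$ or $r^{-1}$.

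In the transition zone $\epsilon^{2/3}<r\leq 2\epsilon^{2/3}$ one has $|\nabla^j\eta|\lesssim \epsilon^{-2j/3}$ and $\eta\in(0,1)$. Let $s=\log(H h_0^{-1})$, so that the rescaled form of (\ref{asymptoticestimate}) reads
\[
|\nabla^j s|_x\lesssim \epsilon^2 r^{-1}\ell^{-j}\max\bigl(1,\log(r/\ell)\bigr),
\]
and in particular $|s|\lesssim \epsilon^{4/3}$ is small. I would expand $h_1=h_0\bigl(I+\eta(e^s-I)\bigr)$ and compute the Chern curvature to get an identity of the schematic form
\[
F_{h_1}=(1-\eta)F_{h_0}+\eta F_H+\bar\partial\eta\wedge\partial s+\partial\eta\wedge\bar\partial s+(\bar\partial\partial\eta)s+\mathcal R,
\]
where $\mathcal R$ collects terms quadratic in $s$, $\nabla s$ and the bundle commutators. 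Applying $\Lambda^{\omega_x}$ and bounding each piece separately---using Lemma \ref{meancurvaturebound} for $\Lambda F_{h_0}$, the $O(r^2)|F_H|$ estimate above for $\Lambda F_H$, and the product estimates $|\nabla^j\eta|\cdot|\nabla^{k-j}s|$ for the cross-terms---a case-by-case inspection shows every contribution falls inside the envelope $\epsilon^2 r^{-1}\ell^{-2}\max(1,\log(r/\ell))$, with the logarithm originating from the $(\bar\partial\partial\eta)s$ and $\bar\partial\eta\wedge\partial s$ pieces. Higher-derivative estimates proceed identically.

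The main obstacle is the transition-zone bookkeeping: one must verify that the $\epsilon^{-2j/3}$ loss from derivatives of $\eta$ is always paid for by the compensating factor in $|\nabla^{k-j}s|\lesssim \epsilon^2 r^{-1}\ell^{-(k-j)}$, which in this band becomes $\lesssim \epsilon^{4/3}\ell^{-(k-j)}$. Since $\ell\leq \sqrt{2}\,r+\epsilon r^{1/2}\lesssim \epsilon^{2/3}$ throughout the transition zone, these scales cooperate exactly, and every cross-term lands inside the stated envelope. Once this accounting is in place, the remaining zones reduce to applying the model bounds after rescaling.
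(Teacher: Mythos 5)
Your proposal is essentially the paper's proof, organized slightly differently. The paper works with the two K\"ahler forms $\omega$ and $\omega'$ in the $(x,y,z)$ versus $(X,Y,Z)$ coordinates and estimates $|\epsilon^4\omega'-\omega|_\omega\lesssim|z|^2$; you phrase the same observation as ``$H$ is HYM with respect to the pulled-back form $\tilde\omega$, differing from $\omega$ by $O(|z|^2)$.'' Both give the conformal-factor argument for $|\Lambda F_H|$ in the zone $r\leq\epsilon^{2/3}$. In the transition zone the paper bounds $\log(Hh_1^{-1})$ via (\ref{asymptoticestimate}) and then $|F_H-F_{h_1}|$; you expand $F_{h_1}$ directly in $\eta$ and $s=\log(Hh_0^{-1})$, which is the same estimate dressed as a curvature identity. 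Two cosmetic inaccuracies worth flagging: (i) with $s=\log(Hh_0^{-1})$ one has $h_1=\bigl(I+\eta(e^s-I)\bigr)h_0$, not $h_0\bigl(I+\eta(e^s-I)\bigr)$ (irrelevant for norms but not for an identity); and (ii) the $\max(1,\log(r/\ell))$ factor is already present in the bound on $s$ itself from (\ref{asymptoticestimate}), so it is not generated by the $(\bar\partial\partial\eta)s$ or $\bar\partial\eta\wedge\partial s$ pieces --- attributing it to the $\eta$-derivatives is a misdescription, though it does not change the final envelope. Otherwise the bookkeeping you outline (derivatives of $\eta$ cost $\epsilon^{-2/3}\sim r^{-1}\leq\ell^{-1}$, derivatives of $s$ cost $\ell^{-1}$, and $\ell\lesssim\epsilon^{2/3}$ in the band) is exactly how the paper closes the argument, including the final remark that each differentiation contributes $r^{-1}$ for $r\lesssim\epsilon^2$ and $\ell^{-1}$ otherwise.
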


\begin{proof}
Using the Taylor expansion of $\sin z$ near $z=0$, the Euclidean metrics 
\[
\omega= \frac{\sqrt{-1}}{2}(dx\wedge d\bar{x}+ dy\wedge d\bar{y}+ dz\wedge d\bar{z}), \quad  \omega'= \frac{\sqrt{-1}}{2}(dX\wedge d\bar{X}+ dY\wedge d\bar{Y}+ dZ\wedge d\bar{Z})
\]	
are approximately conformal for $|\vec{x}|\ll \frac{1}{3}$:
	\[
	|\epsilon^{4}\omega'- \omega|_{\omega} \leq C|z|^2 .
	\]
The curvature estimates (\ref{FHbound1})(\ref{FHbound2}) then translate into (\ref{curvatureHrescaled}) by scaling.
The $\omega'$-HYM metric $H$ is approximately HYM with respect to $\omega$ as well:
	\begin{equation}\label{meancurvatureestimatepreparation}
	\begin{split}
	|  \Lambda F_H| &\lesssim |F_H|_\omega | \epsilon^{4}\omega'- \omega|_\omega \lesssim |z|^2 |F_H|_\omega \lesssim \epsilon^{-4}|z|^2 |F_H|_{\omega'}
\\
	\lesssim & |z|^2
	\begin{cases}
	\epsilon^2 r \ell^{-4}, \quad  \epsilon^2< r\leq 2\epsilon^{2/3}
	\\
	r^{-2}, \quad  \quad\quad r\lesssim \epsilon^2
	\end{cases}
	\\
	\lesssim &
	\begin{cases}
	(\epsilon^{-2} r^3)
	\epsilon^2\ell^{-2} r^{-1},  \quad   \epsilon^2 \lesssim r\lesssim 2\epsilon^{2/3}    \\
	1 \quad \quad \quad\quad \quad \quad\quad \quad   r\lesssim \epsilon^2.
	\end{cases}    .
	\end{split}
	\end{equation}

On the gluing region $|\vec{x}|\sim \epsilon^{2/3}$, according to  (\ref{asymptoticestimate}) 
\[
\begin{split}
|\nabla^k_H  \log ( H H_0^{-1}   ) |_{\omega'}\lesssim_k ( |X|+|Y|+|Z|^{1/2}    )^{-k} |\vec{X}|^{-1} \max(1, \log ( \frac{|\vec{X}|}{ |X|+|Y|+|Z|^{1/2} }  ) )    .
\end{split}
\]	
Thus the gluing error is small:
\[
|\nabla^k_H  \log ( H h_1^{-1}   ) |_{\omega'}\lesssim_k  ( |X|+|Y|+|Z|^{1/2}    )^{-k} |\vec{X}|^{-1} \max(1, \log ( \frac{|\vec{X}|}{ |X|+|Y|+|Z|^{1/2} }  ) ).
\]
In particular $h_1$ is a Hermitian metric. The deviation of $h_1$ from $H$ causes an error
\[
|F_H-F_{h_1}|_{\omega'} \lesssim  ( |X|+|Y|+|Z|^{1/2}    )^{-2} |\vec{X}|^{-1} \max(1, \log ( \frac{|\vec{X}|}{ |X|+|Y|+|Z|^{1/2} }  ) ),
\]
so the mean curvature error 
\[
\begin{split}
|\Lambda(F_H- F_{h_1}) | \lesssim |F_H-F_{h_1} |_{\omega} \lesssim \epsilon^{-4} |F_H-F_{h_1} |_{\omega'} \\
 \lesssim \epsilon^2 \ell^{-2} r^{-1} \max(1, \log (r\ell^{-1}) ) .
\end{split}
\]
Combining this with (\ref{meancurvatureestimatepreparation}) 
gives the estimate on $|\Lambda F_{h_1}|$. Each differentiation gives an extra factor $O(r^{-1} )$ for $r\lesssim \epsilon^2$, and a factor $O(\ell^{-1})$ for $\epsilon^2\lesssim r\lesssim \epsilon^{2/3}$. 
\end{proof}

\subsection{Behaviour in various characteristic regions}\label{Behaviourinvariouscharacteristicrgions}

We assume $\epsilon^2 e^N\ll 1$, so that $\epsilon r\ll 1$ inside the domain $\mathcal{D}_N$.
The main ansatz $h_1$ exhibits 3 characteristic behaviours in 3 regions:
\begin{itemize}
\item  The region $  \{ r\lesssim \epsilon^{2/3} \}$ where $h_1$ is modelled on $H$.
\item
The region
$\{ |x|+|y|\gtrsim \epsilon r^{1/2} \text{ and } r> 2\epsilon^{2/3} \}$ where $\nabla_{h_1}$ is approximately flat.
\item
The region $\{  |x|+|y|\lesssim \epsilon r^{1/2} \text{ and } r> 2\epsilon^{2/3}   \}$ where the curvature concentrates.
\end{itemize}
The discussion will be very similar to \cite[section 1.1]{Li}.
We introduce a weight function uniformly equivalent to
\begin{equation}\label{rhoregularityscale}
\rho \sim  \begin{cases}
1, \quad  &\ell \gtrsim 1,\\
\ell, \quad   & r\gtrsim \epsilon^2, \quad \ell\lesssim 1,
\\
r, \quad &r\lesssim \epsilon^2.
\end{cases}
\end{equation}

Case I is already summarized in section \ref{monadconstruction} and Lemma \ref{meancurvaturebound2} (\cf also \cite[section 1.1]{Li}), up to a rescaling of the ambient Euclidean metric. The regularity scale in this region is at least $\rho$.

We now examine case II. Without loss of generality consider the subregion 
$|x|\lesssim |y|$, so $|y|\sim \ell\gtrsim \epsilon r^{1/2}$. A basis of holomorphic sections on $E$ can be represented by sections of $\ker \beta$:
\[
s_{(1)}= (0,0,1,0)^t, \quad s_{(2)}= (\sin z/y, 0, 0, 1)^t.
\]
The projections of $s_1,s_2$ to the orthogonal complement of $\text{Im}(\alpha)$ are respectively
\[
\begin{cases}
s_1'= s_{(1)}- \alpha (\alpha^\dag \alpha)^{-1} \alpha^\dag s_{(1)} 
=s_{(1)}- \frac{1}{ (|x|^2+|y|^2)r^{-1} +\epsilon^2        } (x,y,\epsilon^2,0)^t,
\\ 
s_2'= s_{(2)}- \alpha (\alpha^\dag \alpha)^{-1} \alpha^\dag s_{(2)}= s_{(2)}- \frac{\bar{x} \sin z/y}{ |x|^2+|y|^2 + \epsilon^2 r       } (x,y,\epsilon^2,0)^t.
\end{cases}
\]
The Hermitian metric $h_1=h_0$ on the cohomology bundle $E$ is represented by the matrix
\[
h_1(s_{(i)},s_{(j)})= h_{\C^4}( s_i', s_j'  )= \delta_{ij}+ O( \frac{ \epsilon^2 r}{ \ell^2 }   )\approx \delta_{ij}  .
\]
If $\ell \gtrsim 1$, then repeated differentiation shows the higher derivative bound
\[
|\partial^k h_1|\lesssim_k  \epsilon^2 r\ell^{-2}   , \quad k\geq 1,
\]
which is compatible with the bound on $|F_{h_1}|$ in Lemma \ref{meancurvaturebound}. The regularity scale in this region is at least of order $\rho\sim 1$. If instead $\ell\lesssim 1$ and $r\gtrsim 1/3$, then
\[
|\partial^k h_1|\lesssim_k  \epsilon^2 r\ell^{-2-k}   , \quad k\geq 1.
\]
Here taking $z$-derivatives does not significantly change the magnitude of the matrix valued function, while taking $x$ and $y$ derivatives typically results in an additional factor of order $O(\ell^{-1} )$. The regularity scale in this region is at least of order $\rho\sim \ell$. The above higher order estimates hold also for $2\epsilon^{2/3} <r\lesssim 1/3$.

The mean curvature $\Lambda F_{h_1}$ has better estimates for $\ell\lesssim 1$ in region II: we can follow the proof of Lemma \ref{meancurvaturebound} to derive an explict expression for the curvature matrix $( \langle \Lambda F_E s_i', s_j'\rangle )$ like formula (\ref{curvatureansatz}), and notice the cancellation effect as in (\ref{cancellationeffect}). The higher order version of Lemma \ref{meancurvaturebound} for $\ell\lesssim1$ in region II is
\[
|\partial^k \langle \Lambda F_E s_i', s_j'\rangle |\lesssim_k \begin{cases}
\epsilon^2 r \ell^{-2-k}  , \quad & r\gtrsim 1,
\\
\epsilon^2 r^{-1} \ell^{-2-k}, \quad & r\lesssim 1,
\end{cases}      
\]
or equivalently \[
|\nabla^k_{h_1} (\Lambda F_E) |\lesssim_k \begin{cases}
\epsilon^2 r \ell^{-2-k}  , \quad & r\gtrsim 1,
\\
\epsilon^2 r^{-1} \ell^{-2-k}, \quad & r\lesssim 1.
\end{cases}     
\]

We now turn to Case III.  Observe that if the ambient Hermitian structure on $\underline{\C^4}$ is changed from $h_{\C^4}$ to 
\[
\epsilon^{-2}r h_{\C^4}= \text{diag}( 1, 1,  \epsilon^{-2} r, \epsilon^{-2}r   ),
\]
then the induced connection on $E$ is \emph{twisted by a $U(1)$ connection} with curvature $\frac{1}{2}\bar{\partial} \partial \log r$. We shall focus on this twisted situation around a given point $(0,0, \zeta)\in \mathcal{D}_N$ with $|\zeta|> 2\epsilon^{2/3}$. Choose a square root $\sin^{1/2} \zeta$. After rescaling the basis vectors on $\underline{\C^4}$, the twisted monad can be written locally as
\begin{equation}\label{monadnearbubblelocus}
\C\xrightarrow{  ( x,y, \epsilon \sin^{1/2}\zeta ,0     )^t      } \underline{\C^4} \xrightarrow{ (-y,x, 0, \epsilon \sin^{1/2} \zeta) }\underline{\C},
\end{equation}
where the Hermitian structure on $\underline{\C^4}$ is 
\[
\tilde{h}_{ \C^4}=\text{diag}( 1, 1,  \frac{r }{|\sin \zeta| }, \frac{ |\sin \zeta|  r }{ |\sin z|^2 }    ).
\]
For $|\text{Im}(z)|>1$, we have $r=\sin z$, so for $|x|+|y|+|z-\zeta|\lesssim \epsilon |\sin \zeta|^{1/2}$,
\[
\begin{cases}
\tilde{h}_{ \C^4}=\text{diag}( 1, 1,  1+ O(|z-\zeta|) , 1+O(|z-\zeta|   )
\\
\partial^k \tilde{h}_{ \C^4}\lesssim_k 1.
\end{cases}
\]
To leading order, the ambient Hermitian metric on $\underline{\C^4}$ is Euclidean, and the twisted monad (\ref{monadnearbubblelocus}) dimensionally reduces to the monad in the  ADHM construction with parameters $(a_1,a_2,b_1,b_2)=(\epsilon \sin^{1/2}\zeta, 0, 0, \epsilon \sin^{1/2}\zeta )$ (\cf Example \ref{ADHMconstruction}), and the induced connection on the cohomology bundle of the twisted monad is approximately a framed instanton whose moduli parameter is identified as $(\epsilon\sin^{1/2}\zeta,0)\in \C^2/\Z_2$. The regularity scale is $\rho\sim \ell$. One can then estimate in this region the difference between $\nabla_{h_1}$ and the instanton connection $\nabla_{\zeta, \epsilon}$ associated with the ADHM monad, by taking into account both the deviation of $\tilde{h}_{\C^4}$ from being Euclidean, and the effect of $U(1)$-twisting:
\[
|\nabla_{\zeta,\epsilon}^k (F_{h_1}- F_{\nabla_{\zeta,\epsilon} } )|\lesssim_k \ell^{-k}   , \quad | \nabla_{\zeta,\epsilon}^k (\Lambda F_{h_1})|\lesssim_k \ell^{-k}   .
\]
If instead $|\text{Im}(z)|\leq 1$ in region III, then for $|x|+|y|+|z-\zeta|\lesssim \epsilon |\sin \zeta|^{1/2}$,
\[
\begin{cases}
\tilde{h}_{ \C^4}= \text{diag}(1,1,1+O( \frac{\epsilon  +|z-\zeta| }{ |\sin \zeta| }  ),1+O( \frac{\epsilon +|z-\zeta|}{ |\sin \zeta| }  )   ),
\\
|\partial^k \partial_x \tilde{h}_{ \C^4}|+ |\partial^k \partial_y \tilde{h}_{ \C^4}|  \lesssim_k \frac{  \epsilon }{ |\sin \zeta|^{3/2} \ell^k }  , \quad k\geq 0.
\\
|\partial^k \partial_z \tilde{h}_{ \C^4} |\lesssim_k \frac{1  }{ |\sin \zeta| \ell^{k} }   ,  \quad  |\partial^k \partial_z \partial_{\bar{z}} \tilde{h}_{ \C^4} |\lesssim_k \frac{1}{ |\sin \zeta|^{2} \ell^{k} }, \quad k\geq 0.
\end{cases}
\]
The connection is still locally modelled on the ADHM one-instantons, albeit the larger deviation. The regularity scale is $\rho\sim \ell$. The curvature deviation has estimates
\[
|\nabla_{\zeta,\epsilon}^k (F_{h_1}- F_{\nabla_\zeta} )|\lesssim_k |\sin \zeta|^{-2} \ell^{-k}       , \quad | \nabla_{\zeta,\epsilon}^k (\Lambda F_{h_1})|\lesssim_k |\sin \zeta|^{-2} \ell^{-k} . 
\]

Combining the above, we have a unified higher order estimate for $\Lambda F$:

\begin{cor}\label{meancurvaturehigherorder}
	In the region $r> 2\epsilon^{2/3}$, 
	\[
	|\nabla^k_{h_1} (\Lambda F_{h_1}) |\lesssim_k  \begin{cases}
	\epsilon^2 r \ell^{-2} \rho^{-k}, \quad & r\gtrsim 1, \\
	\epsilon^2 r^{-1} \ell^{-2} \rho^{-k}, \quad  & r\lesssim 1.
	\end{cases}	
	\]
\end{cor}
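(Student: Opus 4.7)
The plan is to verify the unified estimate by going region by region and checking that the case-by-case bounds already established in the section subsume the stated uniform bound. Since $r > 2\epsilon^{2/3}$ excludes the Case I region $\{r \lesssim \epsilon^{2/3}\}$ where $h_1$ is modelled on $H$, only Cases II and III need to be combined.

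First I would handle Case II, where $|x|+|y| \gtrsim \epsilon r^{1/2}$. Here the higher-order estimates on $|\nabla^k_{h_1}(\Lambda F_E)|$ in terms of $\ell^{-2-k}$ were already derived by differentiating the curvature formula from Lemma \ref{curvatureformulamonadlemma} and exploiting the cancellation effect (\ref{cancellationeffect}). To match the asserted bound, I would split into two subcases according to the definition of $\rho$: when $\ell \gtrsim 1$ we have $\rho \sim 1$ and the factor $\ell^{-k}$ is trivially bounded; when $\ell \lesssim 1$ we have $\rho \sim \ell$, so $\epsilon^2 r \ell^{-2-k} = \epsilon^2 r \ell^{-2}\rho^{-k}$, matching the claim on the nose (respectively for $r \lesssim 1$ with the $r^{-1}$ prefactor).

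Next I would handle Case III, where $|x|+|y| \lesssim \epsilon r^{1/2}$, so in particular $\ell \lesssim \epsilon r^{1/2}$ and $\rho \sim \ell$. In the subregion $|\text{Im}(z)| > 1$ the bound established is $|\nabla_{\zeta,\epsilon}^k(\Lambda F_{h_1})| \lesssim \ell^{-k}$. The Case III condition gives $\ell^2 \lesssim \epsilon^2 r$, hence $\epsilon^2 r \ell^{-2} \gtrsim 1$, so $\ell^{-k} \lesssim \epsilon^2 r \ell^{-2}\rho^{-k}$ as required. In the subregion $|\text{Im}(z)| \leq 1$ the bound is $|\sin \zeta|^{-2}\ell^{-k}$; here $|\sin \zeta| \sim r$ by the defining properties of $r$ near the $z$-axis, so this becomes $r^{-2}\ell^{-k}$, and the same inequality $\ell^2 \lesssim \epsilon^2 r$ gives $r^{-2} \lesssim \epsilon^2 r^{-1}\ell^{-2}$, yielding the stated bound in the $r \lesssim 1$ branch. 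The frames $\nabla_{\zeta,\epsilon}$ and $\nabla_{h_1}$ differ only by a uniformly controlled change of gauge on the scale $\rho \sim \ell$, so $\nabla^k_{h_1}$ and $\nabla^k_{\zeta,\epsilon}$ are comparable at each order up to harmless constants.

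The main bookkeeping obstacle is ensuring that the transition between Cases II and III happens consistently, i.e., that the bound on $|x|+|y| \sim \epsilon r^{1/2}$ from both sides is compatible. This is automatic because at the interface $\ell \sim \epsilon r^{1/2}$, both sides of the boundary give $\rho \sim \epsilon r^{1/2}$ and the respective bounds merge into a common value. Once this has been noted, no new computation is required beyond assembling the estimates already proved in Lemma \ref{meancurvaturebound}, Lemma \ref{meancurvaturebound2}, and the Case II/III discussion.
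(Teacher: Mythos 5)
Your proposal is correct and takes essentially the same route as the paper: the paper's proof of this corollary is just the one-line ``combining the above,'' referring to the case-by-case analysis of Section 2.1, and you have correctly fleshed out what that combination entails. In particular, the key arithmetic in Case III --- that $\ell \sim \epsilon r^{1/2}$ makes $\epsilon^2 r \ell^{-2} \gtrsim 1$ and $|\sin\zeta|\sim r$ --- is the right observation, and matches the intended reading of the text.
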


\section{Dirichlet problem}

We shall construct our examples of HYM metrics by solving a Dirichlet boundary value problem. Our approach is a computational shortcut, but the alternative method based on perturbative analysis seems likely to work as well.

The following general result is proved by combining Donaldson's solution to the Dirichlet problem in the smooth case \cite{Donaldsonboundary}, with Bando and Siu's method to treat reflexive sheaves \cite{BandoSiu}.

\begin{thm}\cite{Li}
Let $E$ be a reflexive sheaf over a compact K\"ahler manifold $(\overline{Z}, \omega)$ with nonempty boundary $\partial Z$, which is locally free near the boundary. For any Hermitian metric $f$ on the restriction of $E$ to $\partial Z$ there is a unique Hermitian $h$ on $E$, which is smooth on the locally free locus, has finite $L^2$ curvature, and solves
\[
\sqrt{-1}\Lambda F_h=0 \text{ in } Z, \quad H=f \text{ over } \partial Z.
\]
\end{thm}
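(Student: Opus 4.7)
The plan is to combine the two cited classical results. Since $E$ is reflexive and locally free in a neighbourhood of $\partial Z$, its singular locus $\Sigma := \operatorname{Sing}(E)$ is a compact analytic subvariety of complex codimension at least three, disjoint from $\partial Z$. First I would construct a smooth background Hermitian metric $h_0$ on $E|_{Z\setminus\Sigma}$ extending $f$; this exists by patching local smooth trivialisations via a partition of unity on a cover refining one over which $E$ is locally free, arranged so that $h_0 = f$ on $\partial Z$.

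Then I would evolve $h_0$ by Donaldson's heat flow
\[
h^{-1}\partial_t h = -2\sqrt{-1}\Lambda F_h, \qquad h(0)=h_0, \qquad h|_{\partial Z \times [0,\infty)} = f,
\]
on the locally free locus. On any precompact subdomain $Z' \Subset Z \setminus \Sigma$ containing $\partial Z$, Donaldson's results \cite{Donaldsonboundary} give long-time existence and convergence to a HYM metric with the prescribed boundary data, via a $C^0$ estimate on $\log(h_0^{-1}h)$ obtained from the maximum principle (using the boundary pinning and the $L^\infty$ control of $\Lambda F_{h_0}$) combined with parabolic interior estimates. To upgrade this to a solution on all of $Z$, I would follow Bando-Siu \cite{BandoSiu}: exhaust $Z \setminus \Sigma$ by an increasing sequence of open subsets $Z_k$ with smooth inner boundary $\partial Z_k \setminus \partial Z$, solve the Dirichlet problem on each $Z_k$ with boundary value $h_0$ on the inner boundary and $f$ on $\partial Z$, and establish uniform-in-$k$ $C^0$ and $L^2$-curvature bounds. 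These bounds are accessible because $\Sigma$ has large enough codimension that compactly supported cutoffs near $\Sigma$ have arbitrarily small Dirichlet energy, so the presence of $\Sigma$ costs nothing in the integrations by parts.

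The main obstacle is showing that the limit of this exhaustion extends across $\Sigma$ as an admissible Hermitian metric on the sheaf $E$ — i.e. an $L^\infty_{\operatorname{loc}}$ Hermitian structure with $L^2$-integrable curvature satisfying $\sqrt{-1}\Lambda F_h = 0$ in the distributional sense on all of $Z$ — and that the boundary datum $f$ is preserved in the limit. Bando-Siu's removable singularity theorem for admissible HYM metrics across codimension $\geq 3$ singular loci handles the extension across $\Sigma$; the boundary condition passes to the limit because $E$ is locally free in a collar of $\partial Z$, where the analysis is purely Donaldson's classical boundary problem, applied uniformly in $k$. Finally, uniqueness follows from the standard argument: for two solutions $h,h'$ with the same boundary data, the relative endomorphism $s = \log(h^{-1}h')$ satisfies an elliptic inequality $\Lambda \bar{\partial}\partial \operatorname{tr}(s) \geq 0$, extends across $\Sigma$ by Bando-Siu, and vanishes on $\partial Z$; the maximum principle then forces $s \equiv 0$.
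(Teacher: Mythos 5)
Your overall strategy---Donaldson's Dirichlet problem on the locally free locus combined with Bando--Siu's handling of the singular set---is exactly what the paper describes as the recipe from \cite{Li}, and the exhaustion-plus-removable-singularity variant you sketch is a legitimate alternative to Bando--Siu's original blowup argument. But there is a concrete gap in the first step that propagates through everything downstream.

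The gap is the construction of $h_0$. A Hermitian metric on $E|_{Z\setminus\Sigma}$ patched together by a partition of unity from local trivialisations is \emph{not} automatically admissible in the Bando--Siu sense: there is no reason its curvature should be $L^2$ near $\Sigma$, nor that $\Lambda F_{h_0}$ should be bounded or even integrable there. Your uniform $C^0$ estimate for the exhaustion scheme is derived from the maximum principle applied to $\log\operatorname{tr}(h_0^{-1}h_k)$, whose almost-subharmonicity defect is $-C(|\Lambda F_{h_0}|+|\Lambda F_{h_k}|) = -C|\Lambda F_{h_0}|$; if this source is not controlled in, say, $L^1$ or $L^p$ for suitable $p$ (the present paper itself bounds the analogous quantity by convolving $|\Lambda F_{h_1}|$ against a Newtonian potential in its barrier construction), the resulting bound is not uniform in $k$ and the whole passage to the limit collapses. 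Similarly, your uniform $L^2$-curvature bound cannot be reduced to the observation that cutoff functions near a codimension-$\geq 3$ set have small Dirichlet energy: one still needs the reference curvature $F_{h_0}$ to be $L^2$ and a Chern--Weil/Bogomolov-type identity to relate $\|F_{h_k}\|_{L^2}$ to fixed topological data plus boundary terms, which requires an admissible reference to start with.

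This is precisely the content of ``Bando and Siu's method'': they produce an admissible $h_0$ by pulling $E$ back along a resolution of singularities $\pi:\tilde{Z}\to Z$ to a genuine vector bundle $\tilde E$ (where the naive partition-of-unity metric \emph{is} smooth everywhere), choosing a degenerating family of K\"ahler metrics $\omega_\epsilon = \pi^*\omega + \epsilon\eta$, and taking the limit $\epsilon\to 0$; the admissibility of the resulting background metric downstairs, in particular the $L^2$ curvature and the mean-curvature control, comes out of explicit estimates on the blowup. Once you replace your partition-of-unity $h_0$ by such an admissible background metric (extending $f$ on $\partial Z$, which is unproblematic since $E$ is locally free in a collar), the rest of your outline---Donaldson's smooth Dirichlet problem on exhausting domains, uniform estimates, extension across $\Sigma$ by the codimension-$\geq 3$ removable singularity theorem, and uniqueness via the distributional subharmonicity of $\log\operatorname{tr}(h^{-1}h')$---does go through. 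A minor point on uniqueness: the correct quantity to feed into the maximum principle is $\log\operatorname{tr}(h^{-1}h')$, not $\operatorname{tr}\log(h^{-1}h')$, and its extension across $\Sigma$ in the distributional sense uses the finiteness of the $L^2$ curvature of both solutions together with the capacity-zero property of $\Sigma$, as the present paper does in its proof of Theorem \ref{metricdeviationtheorem}.
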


In our situation, we apply the theorem to the reflexive sheaf $E$ over the domain $\mathcal{D}_N$, with boundary data $h_1$. This provides a HYM metric $h=h_{N,\epsilon}$ depending on $\epsilon$ and $N$. The rest of this section is to show that geometric properties of $h_1$ transfer to $h$.

\subsection{Barrier construction}

Consider the Newtonian potential on $\C^3=\R^6$, given up to constant by
\[
\gamma(\vec{x})=\gamma(x,y,z)= \frac{1}{ |\vec{x}|^4  }= \frac{ 1}{ |(x,y,z)|^4   }.
\]
The periodic Newtonian potential on $\C^3/\Z$ is then
\begin{equation}
\Gamma(\vec{x})= \sum_{n=-\infty}^\infty \frac{ 1}{ |(x,y,z+2\pi n)|^4   }.
\end{equation}
Without loss of generality  $|\text{Re}(z)|\leq \pi$.
Using Cauchy's integral test, 
\begin{equation}
\Gamma(\vec{x}) \lesssim \begin{cases}
\frac{ 1}{ |\vec{x}|^4   }, \quad &|\vec{x}|< \frac{1}{2},
\\
\frac{1}{  |\vec{x}|^3    }, \quad & r\gtrsim  \frac{1}{2}.
\end{cases}
\end{equation}

We consider the function on $\mathcal{D}_N$,
\begin{equation}
U(\vec{x})= \int_{\mathcal{D}(N) } |\Lambda F_{h_1} |(\vec{x}') \Gamma(\vec{x}-\vec{x}') d\text{Vol}(\vec{x}')
\end{equation}

\begin{lem}\label{boundonU}
Assume $\epsilon^2 e^N\ll 1$. Then 
\begin{equation}
\begin{split}
U(\vec{x}) \lesssim \begin{cases}
\epsilon^2 e^N \max( \log N, -\log \ell  ), \quad  &r\gtrsim 1/10,
\\
\epsilon^2 e^N \log N+ \epsilon^2 r^{-1} \max(1, \log \frac{r}{\ell}), \quad & 4\epsilon^{2/3} \leq r< 1/10,
\\
\epsilon^2 e^N \log N+ \epsilon^2 r^{-1} \max(1, \log^2 \frac{r}{\ell}), \quad & \frac{\epsilon^{2/3}}{2} \leq r< 4\epsilon^{2/3},
\\
\epsilon^2 e^N \log N+  \epsilon^{4/3}+ r^2 \log \frac{r}{\ell} , \quad &r< \frac{\epsilon^{2/3} }{2}.
\end{cases}
\end{split}
\end{equation}
\end{lem}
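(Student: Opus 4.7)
The plan is to bound $U$ by splitting the integration in $\vec{x}'$ over $\mathcal{D}_N$ into three regions according to $r(\vec{x}')$, and in each to substitute the appropriate bound on $|\Lambda F_{h_1}|$ from Lemmas \ref{meancurvaturebound} and \ref{meancurvaturebound2}, combined with the two-regime decay of $\Gamma$ ($\lesssim 1/|\vec{x}-\vec{x}'|^4$ for $|\vec{x}-\vec{x}'|<1/2$, and $\lesssim 1/|\vec{x}-\vec{x}'|^3$ otherwise). I write
\[
U = U_{\mathrm{out}} + U_{\mathrm{mid}} + U_{\mathrm{in}}
\]
corresponding to the three integration regions $\{r'\ge 1/3\}$, $\{2\epsilon^{2/3}\le r'<1/3\}$, and $\{r'<2\epsilon^{2/3}\}$ respectively.

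For $U_{\mathrm{out}}$ I would insert $|\Lambda F_{h_1}|\lesssim \epsilon^2 r'(\ell')^{-2}$ and first perform the transverse integration $\int (\ell')^{-2}\,dx'\,dy'\lesssim \log\bigl(N/(\epsilon(r')^{1/2})\bigr)$, which under the hypothesis $\epsilon^2 e^N\ll 1$ (giving $|\log\epsilon|\gtrsim N/2$) absorbs into a clean $\log N$ factor. Next, integrating in $b' = \operatorname{Im}(z')$, the exponential growth $r'\sim e^{|b'|}$ balanced against the axial decay of $\Gamma$ is dominated by the slice $|b'|\sim N$ and yields the uniform bulk bound $\lesssim\epsilon^2 e^N\log N$. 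When $\ell(\vec{x})\ll 1$, i.e.\ $\vec{x}$ is close to the bubble locus, the near-field regime $\Gamma\lesssim 1/|\vec{x}-\vec{x}'|^4$ acting on the transverse tube of concentrated curvature yields an additional four-dimensional logarithmic enhancement (since $-\Delta_{\mathbb{R}^4}(\log\rho)\sim \rho^{-2}$ is the transverse source profile), producing the $-\log\ell$ summand in case~(i).

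For $U_{\mathrm{mid}}$ I would use $|\Lambda F_{h_1}|\lesssim \epsilon^2(r')^{-1}(\ell')^{-2}$, supported in two unit balls around the singular points. For $\vec{x}$ with $r\ge 4\epsilon^{2/3}$, convolution against the near-field kernel around the closest singular point, splitting transverse from axial integration and using the four-dimensional Newtonian potential of $(\ell')^{-2}$, delivers the local contribution $\lesssim \epsilon^2 r^{-1}\log(r/\ell)$ of cases~(ii) and~(iii); in the gluing annulus $\epsilon^{2/3}/2\le r<4\epsilon^{2/3}$ the extra $\log(r/\ell)$ factor in $|\Lambda F_{h_1}|$ coming from Lemma~\ref{meancurvaturebound2} composes with the transverse logarithm to upgrade this to $\log^2(r/\ell)$. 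For $U_{\mathrm{in}}$, I would use the Case I bound $|\Lambda F_{h_1}|\lesssim r'^{-2}$ on a ball of radius $2\epsilon^{2/3}$, whose total mass (computed in six real dimensions) is $O(\epsilon^{8/3})$; convolving against $1/|\cdot|^4$ gives the $\epsilon^{4/3}$ term for $\vec{x}$ outside the core, while for $\vec{x}$ inside the core the local four-dimensional Newtonian potential supplies the remaining $r^2\log(r/\ell)$ term.

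The hard part will be the careful extraction of the logarithmic factors in cases~(i) and~(iii): both the $-\log\ell$ enhancement near the bubble locus and the iterated $\log^2(r/\ell)$ in the gluing annulus demand a tight pairing of the four-dimensional transverse Newtonian logarithm against the logarithmic singularity of the integrand itself, with error terms from the axial ($z'$) direction and the boundary of $\mathcal{D}_N$ kept under control. A secondary bookkeeping nuisance is justifying the uniform $\log N$ cross-sectional bound using $\epsilon^2 e^N\ll 1$, which allows competing factors of $|\log\epsilon|$, $N$, and $N/2$ to be absorbed into the single clean factor of $\log N$ stated in the lemma.
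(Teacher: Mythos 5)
Your plan mirrors the structure of the paper's proof: split the source region according to $r'$ (outer/middle/inner), substitute the pointwise bounds from Lemmas~\ref{meancurvaturebound} and~\ref{meancurvaturebound2}, and estimate the resulting convolution against $\Gamma$ scale by scale. The outer-region analysis ($\epsilon^2 e^N\log N$ bulk contribution plus a $-\log\ell$ short-distance enhancement) and middle-region analysis ($\epsilon^2 r^{-1}\log(r/\ell)$ from the concentrated tube) are the same ingredients the paper uses, even if your intermediate decomposition of the outer convolution (transverse then axial, with a $\log(N/(\epsilon r'^{1/2}))$ claimed for the $4$-real-dimensional transverse integral alone) is not literally what the paper does --- the paper keeps $\Gamma$ \emph{inside} the $4$-plane integral, obtaining $\lesssim |z-z'|^{-1}$, precisely because $\int_{\mathbb{R}^4}\ell'^{-2}$ without the kernel grows like $N^2$ rather than a log.

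The genuine gap is in $U_{\mathrm{in}}$. You use the bound $|\Lambda F_{h_1}|\lesssim (r')^{-2}$ on the whole ball $\{r'<2\epsilon^{2/3}\}$, but this is the estimate for the \emph{full} curvature $|F_{h_1}|$ near the origin, not the mean curvature. Lemma~\ref{meancurvaturebound2} gives $|\Lambda F_{h_1}|\lesssim 1$ for $r'\lesssim\epsilon^2$, $\lesssim (r')^2(\ell')^{-2}$ for $\epsilon^2<r'\le\epsilon^{2/3}$, and the extra $\log$-factor in the gluing annulus. Using $(r')^{-2}$ is a huge overestimate (the correct total mass is $\sim\epsilon^4$, not $\epsilon^{8/3}$), and your own arithmetic does not close: with mass $\epsilon^{8/3}$ and kernel $|\cdot|^{-4}$ in $\mathbb{R}^6$, the far-field potential at distance $\epsilon^{2/3}$ would be $\epsilon^{8/3}/(\epsilon^{2/3})^4=1$, not $\epsilon^{4/3}$. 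Worse, for $\vec{x}$ deep inside the core ($r\sim\epsilon^2$), the source $(r')^{-2}$ concentrated near $\vec{x}$ produces a $\log(\epsilon^{2/3}/r)$-size contribution, far exceeding the claimed $\epsilon^{4/3}+r^2\log(r/\ell)$. The correct, smaller bound $(r')^2(\ell')^{-2}$ is essential for case~(iv). A second, more minor bookkeeping issue: the $\log^2(r/\ell)$ upgrade comes from the \emph{source} lying in the gluing shell $\{\epsilon^{2/3}\le r'\le 2\epsilon^{2/3}\}$, which under your decomposition belongs to $U_{\mathrm{in}}$, not $U_{\mathrm{mid}}$ as you state.
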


\begin{proof}
The starting point is the pointwise upper bound on $|\Lambda F_{h_1}|$ from Lemma \ref{meancurvaturebound} and \ref{meancurvaturebound2}. We divide and conquer the contributions from various regions. Notice by assumption $|\epsilon^2 \sin z' |\lesssim \epsilon^2 e^N \ll 1$.

We first analyse the source from $\{ r(\vec{x}')\gtrsim \frac{1}{3}   \}$, where $|\Lambda F_{h_1}|\lesssim \epsilon^2 r\ell^{-2}$. For $|z-z'|\gtrsim 1 $, the contribution from the 4-plane
\[
\begin{split}
&\int_{\R^4} \ell^{-2}(x',y', z') \Gamma(\vec{x}-\vec{x}') d\mathcal{H}^4(x',y') \\
\lesssim &
\int_{\R^4}  (|x'|^2+|y'|^2+\epsilon^2 |\sin z'|)^{-1} |\vec{x}-\vec{x}'|^{-3}d\mathcal{H}^4(x',y') 
\\
\lesssim & |z-z'|^{-1}.
\end{split}
\]
Similarly, for $|z-z'|\lesssim 1$,
\[
\begin{split}
\int_{\R^4\setminus \{ |x-x'|+|y-y'|\lesssim 1   \}  } \ell^{-2}(x',y', z') \Gamma(\vec{x}-\vec{x}') d\mathcal{H}^4(x',y')
\lesssim 1.
\end{split}
\]
Integrating over all such 4-planes in the $\vec{x}'$-domain,
\[
\begin{split}
&\int_{ \mathcal{D}_N\setminus \{ |\vec{x}-\vec{x}'|<1/6  \}   }  r\ell^{-2}(x',y', z') \Gamma(\vec{x}-\vec{x}') dvol(\vec{x}')
\\
\lesssim &
\int_{ \{ |\text{Im}(z')|<N, |\text{Re}(z')|\leq \pi  \}   }  |z-z'|^{-1} (1+ |\sin z'|)  d\text{Re}(z')d\text{Im}(z')
\\
\lesssim &  e^{N}\log N.
\end{split}
\]
On the other hand, consider the short distance contribution from $\{ |\vec{x}'-\vec{x}|< 1/6 \}$. If $r(\vec{x})\gtrsim 1/10$, then
\[
\begin{split}
&\int_{ |\vec{x}-\vec{x}'|<1/6     } r\ell^{-2}(x',y', z') \Gamma(\vec{x}-\vec{x}') dvol(\vec{x}') \\
\lesssim 
&
e^N \int_{ |\vec{x}-\vec{x}'|<1/6  } (|x'|^2+|y'|^2+\epsilon^2 |\sin z'|)^{-1} |\vec{x}-\vec{x}'|^{-4} dvol(\vec{x}')
\\
\lesssim & 
 e^N \max(1,  -\log \ell(\vec{x}) ).
\end{split}
\]
Here the last inequality can be seen by summing up contributions from all dyadic scales $|\vec{x}-\vec{x}'|\sim 2^{-k}$. If $|\vec{x}|<  \frac{1}{10}$ instead, then $\vec{x}$ is sufficiently far from $\{  |\vec{x}'|\gtrsim 1/3  \}$, so that this short distance contribution is zero. To summarize, the contribution to $U(\vec{x})$ from the region $\{  r(\vec{x}')\gtrsim 1/3  \}$ is bounded by \[
\begin{cases}
O( \epsilon^2 e^N \max( \log N, -\log \ell(\vec{x}) ) ), \quad  & r\gtrsim 1/10,
\\
O( \epsilon^2 e^N \log N), \quad & r<1/10  .
\end{cases}
\]

Next we analyse the source from $\{    2\epsilon^{2/3} < |\vec{x}'|< 1/3 \}$, which admits the bound $|\Lambda F_{h_1}| \lesssim \epsilon^2 \ell^{-2}r^{-1} $. Since this source has total $L^1$ integral $O(\epsilon^2)$, for $r\gtrsim 1/2$ the contribution to $U(\vec{x})$ is bounded by $O(\epsilon^2 r^{-3})$, which is negligible compared to the previous case. We focus on $\{ |\vec{x}|<1/2  \}$, where the contribution is bounded by
\[
\epsilon^2\int_{ \{   2 \epsilon^{2/5} <|\vec{x}'|< 1/3 \} } \frac{1}{ (|x'|+|y'|+\epsilon|z'|^{1/2} )^2 |\vec{x}'| } \frac{1}{ |\vec{x}-\vec{x}'|^4  } dvol(\vec{x}').
\]
This integral can be estimated by summing up the contribution from all dyadic scales $ |\vec{x}-\vec{x}'|\sim 2^{-k} $. For $\epsilon^{2/3} \lesssim |\vec{x}|\lesssim 1/2$, the integral is bounded by $O( \epsilon^2 r^{-1} \log \frac{r}{\ell} )$. For $|\vec{x}|< \epsilon^{2/3}$, which is separated from the support of the source, the contribution is bounded by $O(\epsilon^2 \epsilon^{-2/3} )=O(\epsilon^{4/3})$.

The source from the gluing region $\{ \epsilon^{2/3} \leq |\vec{x}'|\leq 2\epsilon^{2/3}   \}$ is very similar except for an extra logarithmic factor. As in the above case, the effect of this source is local. In the region $\{ \frac{\epsilon^{2/3}}{2} < |\vec{x}|<4\epsilon^{2/3}   \}$, the contribution is bounded by $O( \epsilon^2 r^{-1} (\log \frac{r}{\ell})^2  )$, and the other regions are essentially unaffected.

Finally we analyse the source from $\{ |\vec{x}'|\leq \epsilon^{2/3}  \}$ where 
\[
|\Lambda F_{h_1}|\lesssim  \begin{cases}
1, \quad   & |\vec{x}'|\lesssim  \epsilon^2,
\\
 r^2   \ell^{-2}, \quad   & \epsilon^2< |\vec{x}'|\leq \epsilon^{2/3} .
\end{cases}
\]
As in the above case, the effect on $U(\vec{x})$ is essentially localized to $\{ |\vec{x}|<4\epsilon^{2/3}  \}$.
For $\epsilon^2\lesssim |\vec{x}|\leq 4\epsilon^{2/5}$, this integral can be decomposed into contribution from $|\vec{x}|\lesssim |\vec{x}'|\lesssim 2|\vec{x}|$, which is $O(|\vec{x}|^2 \log \frac{r}{\ell} )$, and the contribution from elsewhere, which is
 $O(\epsilon^{4/3}   )$. For $|\vec{x}|\lesssim \epsilon^2$, this integral is $O( \epsilon^{4/3} )$.

Combining the above yields the result.
\end{proof}

\subsection{Metric deviation and curvature estimates}\label{Metricdeviationandcurvatureestiamtes}

For $\epsilon$ sufficiently small depending on $N$,
we shall show that the HYM metric $h_{N,\epsilon}$ is close to the main ansatz $h_1$, so inherits its geometric properties.

\begin{thm}\label{metricdeviationtheorem}
	Assume $\epsilon\ll e^{-N}$.
	The HYM metric $h$ on $E$ admits the estimates:
	\begin{equation}\label{metricdeviation}
	|\nabla^k_{h_1}  \log ( h h_1^{-1}   ) |\lesssim_k \rho^{-k} \begin{cases}
 \epsilon^2 e^N \max( \log N, -\log \ell  ), \quad  &r\gtrsim 1/10,
	\\
	\epsilon^2 e^N \log N+ \epsilon^2 r^{-1} \max(1, \log \frac{r}{\ell}), \quad & 4\epsilon^{2/3} \leq r< 1/10,
	\\
	\epsilon^2 e^N \log N+ \epsilon^2 r^{-1} \max(1, \log^2 \frac{r}{\ell}), \quad & \frac{\epsilon^{2/3}}{2} \leq r< 4\epsilon^{2/3},
	\\
	\epsilon^2 e^N \log N+  \epsilon^{4/3}+ r^2 \log \frac{r}{\ell} , \quad &r< \frac{\epsilon^{2/3} }{2}.
	\end{cases}
	\end{equation}
Here the weight $\rho$ is given by (\ref{rhoregularityscale}). All estimates are uniform in $N$ and $\epsilon$.
\end{thm}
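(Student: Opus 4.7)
The strategy is to combine a Donaldson-type pointwise comparison for Hermitian metrics with the Newtonian-potential barrier constructed in Lemma \ref{boundonU}, and then to bootstrap to higher derivatives on balls of radius $\rho$.

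Set $\sigma = h h_1^{-1}$, viewed as an $h_1$-self-adjoint positive endomorphism of $E$ on the locally free locus.  Donaldson's comparison inequality
\[
\Delta \log \operatorname{tr}\bigl(\sigma + \sigma^{-1}\bigr)\;\geq\; -C\bigl(|\Lambda F_h|_h + |\Lambda F_{h_1}|_{h_1}\bigr),
\]
with $\Delta$ the background Euclidean Laplacian on $\mathcal{D}_N$ and $C = C(\operatorname{rank} E)$, collapses in view of $\Lambda F_h = 0$ to $\Delta f \geq -C|\Lambda F_{h_1}|$ for the nonnegative function $f := \log\operatorname{tr}(\sigma+\sigma^{-1})-\log(2\operatorname{rank} E)$.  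By construction $f|_{\partial\mathcal{D}_N}=0$, and by the asymptotic matching of both $h$ and $h_1$ with the tangent-cone connection $H$ at the two singular points $(0,0,0),(0,0,\pi)\in\mathcal{D}_N$, Bando--Siu removable-singularity theory guarantees that $f$ extends as a bounded weak subsolution across the singular locus.  Since the periodic Newtonian potential obeys $\Delta\Gamma=-c\delta_0$ on $\mathcal{D}_N/\mathbb{Z}$, the barrier $U$ from Lemma \ref{boundonU} satisfies $\Delta U = -c|\Lambda F_{h_1}|$; hence for $C$ large enough, $\Delta(f-CU)\geq 0$, and $f-CU\leq 0$ on $\partial\mathcal{D}_N$ because $U\geq 0$.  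The maximum principle then yields $f\leq CU$ pointwise, which by Lemma \ref{boundonU} is precisely the $k=0$ case of (\ref{metricdeviation}), uniform in $N$ and $\epsilon$ under the standing assumption $\epsilon^2 e^N\ll 1$.

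For $k\geq 1$, I would use interior elliptic regularity on balls of radius comparable to the regularity scale $\rho$ of (\ref{rhoregularityscale}).  On each such ball the region-by-region analysis of section \ref{Behaviourinvariouscharacteristicrgions} furnishes a holomorphic trivialisation of $E$ in which $h_1$ is smooth, uniformly bounded and bounded below after rescaling to unit size, and $\sigma$ satisfies a quasilinear second-order elliptic system of Donaldson type whose inhomogeneity is $\Lambda F_{h_1}$ and whose coefficients are uniformly controlled by $h_1$.  Schauder (or $L^p$) bootstrap applied after this rescaling, combining the $C^0$-bound from the previous step with the derivative estimates on $\Lambda F_{h_1}$ from Corollary \ref{meancurvaturehigherorder}, produces the higher derivative bounds with the scaling factor $\rho^{-k}$.

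The main obstacle is making the maximum-principle step rigorous across the two singular points of $E$: one needs a uniform a priori bound on $|\log(hh_1^{-1})|$ in a neighbourhood of each singularity, of size independent of $N$ and $\epsilon$.  This relies on the uniqueness and polynomial asymptotics of the tangent-cone metric $H$ recalled in section \ref{monadconstruction} (and \cite{Li}), which force $h$ to be close to $h_1$ on the scale of the singularity before the barrier $U$ is even invoked.  A secondary point is verifying that the Schauder rescaling in the final step genuinely works uniformly in the degenerate regions $\epsilon^2\lesssim r\lesssim\epsilon^{2/3}$ and along the bubble locus; this is essentially what section \ref{Behaviourinvariouscharacteristicrgions} was engineered to supply, so the residual work is purely bookkeeping of how the weights $r$, $\ell$ and $\rho$ interact under scaling.
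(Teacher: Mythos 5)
Your proposal follows the paper's proof essentially step for step: a Donaldson/Ni-type almost-subharmonicity inequality for $\log\operatorname{tr}(hh_1^{-1})$, comparison against the Newtonian-potential barrier $U$ of Lemma \ref{boundonU} with vanishing boundary data, and then interior elliptic (Bando--Siu type) bootstrap on balls of radius $\rho$ using Corollary \ref{meancurvaturehigherorder}. The one place you diverge is the step you flag as the ``main obstacle'': you propose to control $\log(hh_1^{-1})$ near the two singular points by appealing to the uniqueness and polynomial decay of the tangent-cone model $H$, before applying the maximum principle. The paper's route here is simpler and does not require any such a priori bound: it observes that the subharmonicity inequality (\ref{subharmonicity}) extends distributionally across the singular locus purely because $h$ and $h_1$ both have finite $L^2$ curvature and the singular set has complex codimension~$3$ (real codimension~$6$), citing the capacity argument in the proof of Proposition~3.1 of \cite{JacobWalpuski2}. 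That makes the tangent-cone input irrelevant at this stage, so your ``main obstacle'' is not really there; the rest of your argument matches the paper's.
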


\begin{proof}
We need the almost subharmonicity estimate  (\cf Lemma 2.5 in \cite{Ni}):
	\begin{equation}\label{subharmonicity}
	\begin{cases}
	\Lap \log  \text{Tr} (h h_1^{-1}) \geq -C(|\Lambda F_{h_1}|+ |\Lambda F_{h}|)= -C|\Lambda F_{h_1}|,
	\\
	\Lap \log \text{Tr} (h_1 h^{-1}) \geq - C|\Lambda F_{h_1}| ,
	\end{cases}
	\end{equation}
	Notice near the origin (\ref{subharmonicity}) continues to hold in the distributional sense, since the $L^2$ curvature of $h$ and $h_1$ are finite, and the singularity has complex codimension 3 (\cf proof of Proposition 3.1 in \cite{JacobWalpuski2}). Using the boundary condition $\text{Tr} (h h_1^{-1})= \text{Tr} (h_1 h^{-1})=\text{rank}(E)=2$, by the comparison principle 
	\[
	\log  \frac{\text{Tr} (h h_1^{-1})}{2} \geq -CU, \quad  \log \frac{\text{Tr} (h_1 h^{-1})}{2} \geq -CU,
	\]
	or equivalently there is a \emph{pointwise estimate} on $\mathcal{D}_N$:
	\begin{equation}
	e^{-CU}  \leq h\leq e^{CU} h_1, 
	\end{equation}
Notice by Lemma \ref{boundonU}, the function $U\ll 1$ on $\mathcal{D}_N$, so in fact
\[
|h-h_1| \leq CU h_1 \ll h_1.
\]

The regularity scale of $h_1$ is bounded below by $\rho$.	
Recall also the higher order control on the mean curvature of the ansatz in Lemma \ref{meancurvaturebound2} and \ref{meancurvaturehigherorder}. Applying Bando and Siu's interior regularity estimate (\cf Appendix C, D in \cite{JacobWalpuksi}) to rescaled balls, we derive the higher order estimate (\ref{metricdeviation}) as required.
\end{proof}

The geometric properties of $h_1$ from section \ref{Behaviourinvariouscharacteristicrgions} carries over to $h_{N,\epsilon}$:

\begin{cor}
(Transverse bubbles)
For every nonzero $(0,0,\zeta)\in \mathcal{D}_N$, let $\lambda_{\zeta, \epsilon}$ be the scaling diffeomorphism $\vec{x}\mapsto (0,0,\zeta)+ \epsilon\vec{x}$. Then for every fixed $N$ and $\zeta$, the rescaled HYM connections $\lambda_{\zeta,\epsilon}^* \nabla_{h_{N,\epsilon} } $ converge in $C^\infty_{loc}$ up to gauge as $\epsilon\to 0$ to a HYM connection on $\C^3$, which is isomorphic to the pullback of the ADHM one-instanton with parameter $(\sin^{1/2} \zeta, 0)\in \C^2/\Z_2$ from Example \ref{ADHMconstruction}.

\end{cor}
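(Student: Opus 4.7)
The plan is to combine the explicit local model from section \ref{Behaviourinvariouscharacteristicrgions} (Case III) with the metric deviation estimate Theorem \ref{metricdeviationtheorem}, showing that the rescaled $h_1$ already converges to the claimed ADHM one-instanton and that the rescaled $h_{N,\epsilon}$ inherits the same limit.

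\textbf{Step 1: rescaled model connection.} Fix $\zeta$ with $\sin\zeta\neq 0$ and work on the $\epsilon$-rescaled ball $\{|\vec{x}'|\leq R\}$ via $(x,y,z)=(0,0,\zeta)+\epsilon(x',y',z')$. On this ball $\sin z=\sin\zeta+\epsilon\cos\zeta\cdot z'+O(\epsilon^2)$ and $r\sim|\sin\zeta|$ is bounded away from $0$ and $\infty$. Performing the $U(1)$-twist $h_{\C^4}\mapsto \epsilon^{-2}rh_{\C^4}$ together with the rescaling of basis vectors described around equation (\ref{monadnearbubblelocus}), the monad (\ref{mainansatzmonad}) pulled back by $\lambda_{\zeta,\epsilon}$ becomes
\[
\underline{\C}\xrightarrow{(x',y',\sin^{1/2}\zeta,0)^t}\underline{\C^4}\xrightarrow{(-y',x',0,\sin^{1/2}\zeta)}\underline{\C}
\]
modulo errors of order $\epsilon$, and the twisted Hermitian structure $\tilde h_{\C^4}$ becomes the Euclidean structure on $\underline{\C^4}$ plus an $O(\epsilon)$ error (this is the statement given in Case III, once evaluated on the rescaled ball). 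This is exactly the ADHM monad with parameters $(a_1,a_2,b_1,b_2)=(\sin^{1/2}\zeta,0,0,\sin^{1/2}\zeta)$ of Example \ref{ADHMconstruction}, whose cohomology carries the framed ADHM one-instanton connection $\nabla_{\zeta,\epsilon}^{lim}$. Under the map (\ref{gluingmap})-type identification $uv=a_1b_1=-a_2b_2,\; u^2=a_1b_2,\; v^2=-a_2b_1$, this point corresponds to $(u,v)=(\sin^{1/2}\zeta,0)\in\C^2/\Z_2$.

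\textbf{Step 2: convergence of the rescaled ansatz.} The $C^k$ estimates on $\tilde h_{\C^4}-\mathrm{Euclidean}$ stated in Case III imply that, in the rescaled coordinates $\vec{x}'$, the twisted ansatz connection converges smoothly to $\nabla_{\zeta,\epsilon}^{lim}$ on compact sets as $\epsilon\to 0$; equivalently, the Case~III estimate
\[
|\nabla_{\zeta,\epsilon}^k(F_{h_1}-F_{\nabla_{\zeta,\epsilon}})|\lesssim_k\ell^{-k}
\]
rescales to a uniform $C^k$ bound on the deviation whose magnitude is $O(\epsilon)$ on any fixed compact set. Finally, the $U(1)$ twisting $\tfrac12\bar\partial\partial\log r$ evaluated on the $\epsilon$-ball has curvature $O(\epsilon^2)$ in rescaled coordinates and can be absorbed by a smooth gauge transform, since its $C^k$ norms rescale to zero.

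\textbf{Step 3: transfer to the HYM metric $h=h_{N,\epsilon}$.} On the rescaled ball we have $r\gtrsim|\sin\zeta|\gtrsim 1/10$ and $\ell\sim\epsilon$, hence $\rho\sim\epsilon$ and $-\log\ell\sim|\log\epsilon|$. The first case of Theorem \ref{metricdeviationtheorem} therefore gives
\[
|\nabla^k_{h_1}\log(hh_1^{-1})|\lesssim_k\epsilon^{-k}\cdot\epsilon^2 e^N(\log N+|\log\epsilon|).
\]
Changing to the rescaled coordinates introduces an extra factor $\epsilon^k$ on the $k$-th derivative, so the rescaled deviation is bounded by $C_k\epsilon^2 e^N(\log N+|\log\epsilon|)$, which tends to zero for each fixed $N,\zeta,k$ as $\epsilon\to 0$. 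Consequently $\lambda_{\zeta,\epsilon}^*h$ and $\lambda_{\zeta,\epsilon}^*h_1$ have the same $C^\infty_{loc}$ limit up to gauge, and by Step 2 this limit is the ADHM one-instanton with modulus $(\sin^{1/2}\zeta,0)\in\C^2/\Z_2$.

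\textbf{Main difficulty.} The routine part is Step 3, which is a pure bookkeeping exercise using Theorem \ref{metricdeviationtheorem}. The conceptually delicate step is Step 1/2: one must carefully untangle three different normalisations—(i) the rescaling of basis vectors needed to turn the coefficients of (\ref{mainansatzmonad}) into ADHM form, (ii) the $U(1)$ twist $h_{\C^4}\to\epsilon^{-2}rh_{\C^4}$ that converts $\tilde h_{\C^4}$ into an approximately Euclidean Hermitian structure, and (iii) the coordinate rescaling $\lambda_{\zeta,\epsilon}$. Only after composing these three operations (and absorbing the twist into a $C^\infty_{loc}$-small gauge transform) does the rescaled connection become manifestly the pullback of a framed ADHM one-instanton. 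Care is also needed to verify that the asymptotic identification of the moduli parameter $(u,v)=(\sin^{1/2}\zeta,0)$ respects the $\Z_2$ quotient, i.e.\ is independent of the choice of square root $\sin^{1/2}\zeta$.
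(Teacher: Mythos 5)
Your proposal is correct and takes essentially the same approach the paper implicitly uses: the paper states this corollary immediately after the sentence ``The geometric properties of $h_1$ from section 2.1 carries over to $h_{N,\epsilon}$,'' without a separate proof environment, so the intended argument is precisely to read off the limit from the Case~III local model and transfer it to $h_{N,\epsilon}$ via Theorem~\ref{metricdeviationtheorem}; your Steps 1--3 flesh this out faithfully, including the correct composition of the basis rescaling, the $U(1)$-twist, and the coordinate rescaling, and the correct identification of the moduli parameter $(u,v)=(\sin^{1/2}\zeta,0)$. One small imprecision in Step~3: you assert $r\gtrsim|\sin\zeta|\gtrsim 1/10$ on the rescaled ball and invoke the first case of (\ref{metricdeviation}), but the second inequality fails for $\zeta$ near the zeros of $\sin$ (still permitted by the hypothesis $(0,0,\zeta)\neq 0$ as long as $\sin\zeta\neq 0$); in that regime one should instead use the case $4\epsilon^{2/3}\leq r<1/10$ of Theorem~\ref{metricdeviationtheorem}, giving the bound $\rho^{-k}\bigl(\epsilon^2 e^N\log N+\epsilon^2 r^{-1}\max(1,\log\tfrac{r}{\ell})\bigr)$, which after the same $\rho^{-k}\cdot\epsilon^k$ cancellation still tends to zero for each fixed $\zeta,N,k$ as $\epsilon\to 0$, so the conclusion is unchanged.
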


\begin{rmk}
Geometrically, we say the transverse bubbles along the $z$-axis are described by the \emph{Fueter map} into the moduli space of framed instantons:
	\begin{equation}
	\C\to \C^2/\Z_2, \quad \zeta\mapsto (\sin^{1/2}\zeta, 0),
	\end{equation}
	which is well defined independent of the choice of square root. 
\end{rmk}

\begin{cor}
(High codimensional bubble at the origin) Let $\lambda_{0,\epsilon^2}$ be the scaling diffeomorphism $\vec{x}\mapsto \epsilon^2 \vec{x}$. Then for every fixed $N$, as $\epsilon\to 0$, the rescaled HYM connections $\lambda_{0,\epsilon^2 }^*\nabla_{ h_{N,\epsilon} }$ converge in $C^\infty_{loc}(\C^3\setminus \{0\})$ up to gauge to the model HYM connection $\nabla_H$ introduced in section \ref{monadconstruction}.
\end{cor}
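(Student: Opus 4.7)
The plan is to combine the metric deviation estimate (Theorem \ref{metricdeviationtheorem}) with the observation that, by construction, the main ansatz $h_1$ already coincides with the model HYM metric $H$ in a small neighbourhood of the origin (under the variable substitution (\ref{gluingmap})). The argument then reduces to tracking the weight $\rho$ under the $\epsilon^{-2}$-dilation.

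First I would verify that $\lambda_{0,\epsilon^2}^* h_1 \to H$ in $C^\infty_{loc}(\C^3\setminus \{0\})$ as $\epsilon \to 0$ for fixed $N$. Fix any compact $K \subset \C^3\setminus \{0\}$. For $\epsilon$ small enough, $\lambda_{0,\epsilon^2}(K) \subset \{r \leq \epsilon^{2/3}\}$, where the cut-off $\eta$ equals $1$, so $h_1 = H$ under the change of coordinates (\ref{gluingmap}). That change of coordinates differs from the pure scaling $\lambda_{0,\epsilon^2}$ only through the replacement $\sin z \leftrightarrow z$, a perturbation of size $O(|z|^3) = O(\epsilon^6 |\vec{X}|^3)$ whose derivatives decay uniformly on $K$ as $\epsilon \to 0$.

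Second, I would show that the rescaled deviation $\lambda_{0,\epsilon^2}^*(h h_1^{-1})$ tends to the identity in $C^\infty_{loc}(\C^3\setminus \{0\})$. In the regime $r < \epsilon^{2/3}/2$, Theorem \ref{metricdeviationtheorem} yields
\[
|\nabla^k_{h_1} \log(h h_1^{-1})| \lesssim \rho^{-k}\bigl(\epsilon^2 e^N \log N + \epsilon^{4/3} + r^2 \log(r/\ell)\bigr),
\]
with $\rho \sim r$ for $r \lesssim \epsilon^2$. For a point with $|\vec{X}| \sim R$ we have $r \sim \epsilon^2 R$, so $\rho^{-k} \sim \epsilon^{-2k} R^{-k}$. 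The chain rule for the pullback produces a factor $\epsilon^{2k}$ that cancels the $\epsilon^{-2k}$ from $\rho^{-k}$, giving
\[
|\nabla^k_{\vec{X}} \,\lambda_{0,\epsilon^2}^* \log(h h_1^{-1})| \lesssim R^{-k}\bigl(\epsilon^2 e^N \log N + \epsilon^{4/3} + O(\epsilon^4 R^2 \log(1/\epsilon))\bigr),
\]
which tends to zero as $\epsilon \to 0$ for fixed $N$ and $\vec{X}$ in any compact subset of $\C^3\setminus \{0\}$.

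Combining the two steps gives $\lambda_{0,\epsilon^2}^* h_{N,\epsilon} \to H$ in $C^\infty_{loc}(\C^3 \setminus \{0\})$; since the Chern connection depends smoothly on the Hermitian metric (with the holomorphic structure fixed), the corresponding connections converge in $C^\infty_{loc}$ up to the gauge transformation that identifies the pullback trivialisation of $E$ with the standard trivialisation of $\mathcal{E}$. I do not expect any substantial obstacle: the proof is essentially bookkeeping, and the only point requiring genuine care is verifying that the scaling of $\rho^{-k}$ exactly matches the chain rule factor $\epsilon^{2k}$, so that the rescaled estimates have a clean $R^{-k}$ dependence rather than a blowup in $\epsilon$.
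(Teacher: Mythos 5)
Your argument is correct and is essentially what the paper intends: the corollary is a direct consequence of Theorem \ref{metricdeviationtheorem} together with the fact that $h_1$ agrees with $H$ (via the substitution (\ref{gluingmap})) where the cutoff $\eta\equiv 1$, and the cancellation between the $\epsilon^{2k}$ chain‑rule factor and the $\rho^{-k}$ weight is exactly the bookkeeping one needs. One small inaccuracy: you write ``$\rho\sim r$ for $r\lesssim\epsilon^2$'' and then apply $\rho^{-k}\sim\epsilon^{-2k}R^{-k}$ for all $|\vec X|\sim R$ on a compact set, but a compact $K\subset\C^3\setminus\{0\}$ also contains points with $|\vec X|\gtrsim 1$, i.e.\ $r\gtrsim\epsilon^2$, where by (\ref{rhoregularityscale}) one has $\rho\sim\ell\sim\epsilon^2(|X|+|Y|+|Z|^{1/2})$ rather than $\rho\sim\epsilon^2|\vec X|$. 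The conclusion is unaffected — $\rho$ is in all cases $\epsilon^2$ times the $H$-regularity scale at $\vec X$, which is bounded away from $0$ and $\infty$ on $K$, so the $\epsilon^{2k}$ from pullback still exactly cancels — but you should state the two regimes separately to keep the weights honest. Everything else (the $O(\epsilon^6|\vec X|^3)$ correction from $\sin z\leftrightarrow z$, and the passage from metric convergence to connection convergence via $\partial h\,h^{-1}$) is handled correctly.
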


\begin{rmk}
Notice instead that the \emph{tangent cone at the origin} for $h_{N,\epsilon}$ is the pullback of the Levi-Civita connection on $\mathbb{CP}^2$ up to twisting by a $U(1)$-connection. To see the tangent cone requires a higher magnification rate around the origin. The model HYM metric $H$ facilitates the transition behaviour from the tangent cone at the origin to the transverse bubbles along the $z$-axis. 
\end{rmk}

\begin{cor}
(uniform $L^2$-energy bound) Assume $\epsilon\ll e^{-N}$. The curvature of the HYM connections $h=h_{N,\epsilon}$ satisfy 
\begin{equation}\label{L2energybound}
\int_{\mathcal{D}_N  }  |F_{h_{N,\epsilon}} |^2 dvol \leq CN,
\end{equation}
where the constant is independent of $N$ and $\epsilon$. For fixed $N$, as $\epsilon\to 0$, the curvature density measure converges weakly to a measure supported on $\{ x=y=0 \}$,
\[
|F_{h_{N,\epsilon}} |^2 dvol \to \Theta \mathcal{H}^2 \lfloor \{  x=y=0 \},
\]
where $\Theta$ is a constant equal to the $L^2$-energy of the one-instanton on $\R^4$.
\end{cor}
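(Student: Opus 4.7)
The plan is to combine the metric deviation Theorem~\ref{metricdeviationtheorem} with the region decomposition of Section~\ref{Behaviourinvariouscharacteristicrgions}, reducing both the $L^2$-bound and the weak limit identification to the explicit ansatz $h_1$. Since $\log(hh_1^{-1})$ is small in the $\rho^{-k}$-weighted $C^2$-sense by (\ref{metricdeviation}), the pointwise difference $||F_h|^2-|F_{h_1}|^2|$ is negligible uniformly in $N$ and $\epsilon$ (in the regime $\epsilon\ll e^{-N}$) and may be ignored throughout. I then partition $\mathcal{D}_N$ into the three characteristic regions already introduced: Region~I $\{r\lesssim \epsilon^{2/3}\}$, Region~II ($|x|+|y|\gtrsim \epsilon r^{1/2}$ and $r>2\epsilon^{2/3}$, approximately flat), and Region~III ($|x|+|y|\lesssim \epsilon r^{1/2}$ and $r>2\epsilon^{2/3}$, bubble concentration).

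On Region~I, the rescaling $\vec{x}=\epsilon^2\vec{X}$ identifies $h_1$ with the model metric $H$ on $\{|\vec{X}|\lesssim \epsilon^{-4/3}\}$; the decay estimates (\ref{FHbound1})-(\ref{FHbound2}) combined with the transverse ADHM bubble structure of $H$ along the $Z$-axis yield $\int_{|\vec{X}|<R}|F_H|^2\lesssim R^2$, and since the six-dimensional scaling picks up a factor $\epsilon^4$ this region contributes only $O(\epsilon^{4/3})$. On Region~II, Lemma~\ref{meancurvaturebound} gives $|F_{h_1}|\lesssim \epsilon^2 r\max(\ell^{-4},\ell^{-2})$; at fixed $z$, the transverse $(x,y)$-integral splits into an inner annulus $\{\ell\lesssim 1\}$ contributing $O(1)$ (the ASD tail of the nearby bubble) and an outer piece contributing $O(\epsilon^4 r^2\log N)$, which is exponentially small since $r\lesssim e^N$ and $\epsilon\ll e^{-N}$; integrating in $z$ over a domain of $\mathcal{H}^2$-area $\sim N$ gives a total of $O(N)$.

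The dominant contribution comes from Region~III: the local analysis in Section~\ref{Behaviourinvariouscharacteristicrgions} identifies each transverse slice as an approximate ADHM one-instanton, whose $L^2$-energy on the $(x,y)$-plane equals the scale-invariant one-instanton constant $\Theta$ up to a small error. Summing over $\zeta\in \{|\text{Im}(\zeta)|<N,\ |\text{Re}(\zeta)|\leq\pi\}$ yields $N\Theta+o(N)$, establishing (\ref{L2energybound}). For the weak limit at fixed $N$ and $\epsilon\to 0$: outside any tube $\{|x|+|y|<\delta\}$ the Region~I and II estimates force $|F_{h_{N,\epsilon}}|^2\to 0$ uniformly, so the limit measure is supported on $\{x=y=0\}$; inside the tube, the transverse bubbles corollary (proved just above) provides $C^\infty_{loc}$-convergence of the rescaled connection to the one-instanton of parameter $(\sin^{1/2}\zeta,0)$, hence the transverse density $|F_{h_{N,\epsilon}}|^2 d\text{vol}_{\C^2_{x,y}}$ converges weakly to a point mass of weight $\Theta$ at the origin of each $(x,y)$-plane. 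Fubini in $z$ then identifies the full weak limit as $\Theta\,\mathcal{H}^2\lfloor \{x=y=0\}$.

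The subtlest point is expected to be Region~III when $|\text{Im}(\zeta)|$ is comparable to $N$: there the bubble scale $\epsilon|\sin\zeta|^{1/2}\sim \epsilon e^{N/2}$ is not microscopically small (though still $\ll 1$ thanks to $\epsilon\ll e^{-N}$), and the ambient Hermitian matrix $\tilde h_{\C^4}$ deviates from Euclidean by factors of order $|\sin\zeta|^{-1}$ in its last two entries. The higher-order curvature deviation estimates at the end of Section~\ref{Behaviourinvariouscharacteristicrgions} show the slice energy differs from $\Theta$ by $O(|\sin\zeta|^{-2})$; verifying that this error integrates over $\zeta$ to $o(N)$ rather than $O(N)$ is what preserves the exact coefficient $\Theta$ in the weak limit and keeps the $L^2$-energy linear in $N$.
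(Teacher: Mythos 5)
Your proposal follows the same approach as the paper: reduce to $h_1$ via Theorem~\ref{metricdeviationtheorem} (the paper records $\norm{F_h-F_{h_1}}_{L^2} \lesssim \epsilon^2 e^N \log^3\epsilon^{-1}$, exponentially small), then integrate over $z$-slices with each transverse 4-plane integral bounded by a constant, and identify the weak limit via the ADHM transverse bubble structure; your explicit region I/II/III bookkeeping is a more granular presentation of the paper's one-line observation that the essential contribution comes from $\{|x|+|y|\lesssim\epsilon r^{1/2}\}$ with $O(1)$ energy per slice. One small inaccuracy worth noting: in your final paragraph the claimed $O(|\sin\zeta|^{-2})$ slice-energy error is attributed to the wrong regime — the factor $|\sin\zeta|^{-1}$ in $\tilde{h}_{\C^4}$ appears for $|\text{Im}(z)|\leq 1$, whereas for $|\text{Im}(\zeta)|\sim N$ the ambient Hermitian matrix deviates from Euclidean only by $O(|z-\zeta|)$ — and the slice-energy error should carry a power of $\epsilon$ so that it vanishes as $\epsilon\to 0$ at fixed $N$ (otherwise the weak limit would not cleanly give the constant $\Theta$), but this slip does not affect the validity of the argument.
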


\begin{proof}
From Theorem \ref{metricdeviationtheorem} the deviation of curvature has the estimate
\[
|F_h- F_{h_1} |\lesssim \rho^{-2}\begin{cases}
\epsilon^2 e^N \max( \log N, -\log \ell  ), \quad  &r\gtrsim 1/10,
\\
\epsilon^2 e^N \log N+ \epsilon^2 r^{-1} \max(1, \log \frac{r}{\ell}), \quad & 4\epsilon^{2/3} \leq r< 1/10,
\\
\epsilon^2 e^N \log N+ \epsilon^2 r^{-1} \max(1, \log^2 \frac{r}{\ell}), \quad & \frac{\epsilon^{2/3}}{2} \leq r< 4\epsilon^{2/3},
\\
\epsilon^2 e^N \log N+  \epsilon^{4/3}+ r^2 \log \frac{r}{\ell} , \quad &r< \frac{\epsilon^{2/3} }{2}.
\end{cases}
\]
Hence
\[
\norm{F_h-F_{h_1} }_{L^2 } \lesssim \epsilon^2 e^N  \log^3 \frac{1}{\epsilon},
\]
which is an exponentially small quantity vanishing in the $\epsilon\to 0$ limit. But the curvature of $h_1$ is estimated in Lemma \ref{meancurvaturebound} and \ref{meancurvaturebound2}:
\[
|F_{h_1} |\lesssim \begin{cases}
\epsilon^2 r \ell^{-2} ,\quad  & \ell\gtrsim 1,
\\
\epsilon^2 r \ell^{-4}, \quad & \ell \lesssim 1, \quad r\gtrsim \epsilon^2,
\\
r^{-2} ,\quad & r\lesssim \epsilon^2.
\end{cases}
\]
The essential contribution to the $L^2$-curvature comes from $\{ |x|+|y|\lesssim \epsilon r^{1/2} \}$.
For each $\zeta\in \C$ with $|\text{Im}(\zeta)|<N$ and $|\text{Re}(\zeta)|\leq \pi$, the integral on the 4-plane
\[
\int_{ \{ z= \zeta, |x|+|y|<N \}  \setminus \{ r\lesssim \epsilon^2  \} } |F_h|^2(x,y,z) d\mathcal{H}^4(x,y) \leq C,
\]
so integrating over all such 4-planes, and taking into account the small region $\{ r\lesssim \epsilon^2 \}$,
\[
\int_{\mathcal{D}_N } |F_h|^2 \leq C N.
\]

The measure convergence is because along the $z$-axis and slightly away from the singular points, the HYM connection is modelled on ADHM one-instantons, and the curvature decays rapidly away from the $z$-axis.
\end{proof}

To relate this discussion back to the setting of Theorem \ref{introthm}, we simply rescale the domain $\mathcal{D}_N$ by a factor $N^{-1}$, and pass to the covering space. This produces a family of HYM metrics on $B_1\subset \C^3$, still denoted as $h_{N,\epsilon}$, which are periodic with respect to the translation $z\mapsto z+2\pi N^{-1}$. Because of the scaling of coordinates, for any fixed $N$ the transverse bubbles  along the $z$-axis as $\epsilon\to 0$ are now described by the Fueter map
\[
\C \mapsto \C^2/\Z_2, \quad \zeta\mapsto (\sin^{1/2}(N\zeta), 0  ),
\]
whose \emph{modulus of continuity has no uniform bound} in $N$. The singular points correspond to the zeros of the HYM connection, so the number grows as $O(N)$.

Scaling down $\mathcal{D}_N$ causes the $L^2$-energy to shrink by a factor $N^{-2}$. Taking $O(N)$ periodic copies of the construction by translation in the $\text{Re}(z)$-variable, we finally get the \emph{uniform $L^2$-energy} estimate independent of $N$ and $\epsilon$:
\[
\int_{B_1} |F|^2 \leq C.
\]
Thus Theorem \ref{introthm} is proved.

\end{document}